\pgfplotsset{legend pos=south east, every axis/.append style={font=\small}}%/pgf/number format/use point,compat=newest,
\newcommand{\bx}{\mathbf{x}}
\newcommand{\bp}{\mathbf{p}}
\newcommand{\pt}{\partial}
\newcommand{\ce}{\mathcal{E}}
\newcommand{\cs}{\mathcal{S}}
\newcommand{\rn}{\mathbb{R}^n}
\newcommand{\rtwo}{\mathbb{R}^2}
\newcommand{\bbn}{\mathbb{N}}
\newcommand{\tdt}{\tilde{t}}
\newcommand{\ve}{\varepsilon}
\newcommand{\te}{\text}
\providecommand{\keywords}[1]{\textit{Keywords ---} #1}
\theoremstyle{definition}
\theoremstyle{plain} 
\newtheorem{thm}{Theorem}
\theoremstyle{remark} 
\newtheorem*{remark}{Remark}
\newcommand\restr[2]{{% we make the whole thing an ordinary symbol
  \left.\kern-\nulldelimiterspace % automatically resize the bar with \right
  #1 % the function
 % \vphantom{\big|} % pretend it's a little taller at normal size
  \right|_{#2} % this is the delimiter
  }}
\DeclareMathOperator\erf{erf}
\begin{document}

%\begin{frontmatter}

%% Title, authors and addresses

%% use the tnoteref command within \title for footnotes;
%% use the tnotetext command for theassociated footnote;
%% use the fnref command within \author or \address for footnotes;
%% use the fntext command for theassociated footnote;
%% use the corref command within \author for corresponding author footnotes;
%% use the cortext command for theassociated footnote;
%% use the ead command for the email address,
%% and the form \ead[url] for the home page:
%% \title{Title\tnoteref{label1}}
%% \tnotetext[label1]{}
%% \author{Name\corref{cor1}\fnref{label2}}
%% \ead{email address}
%% \ead[url]{home page}
%% \fntext[label2]{}
%% \cortext[cor1]{}
%% \address{Address\fnref{label3}}
%% \fntext[label3]{}

\title{Inverse heat conduction to model and optimise a geothermal field}

\date{}

%% use optional labels to link authors explicitly to addresses:
%% \author[label1,label2]{}
%% \address[label1]{}
%% \address[label2]{}

\author[1]{Nadaniela Egidi\thanks{nadaniela.egidi@unicam.it}}
%\ead{nadaniela.egidi@unicam.it}

\author[1]{Josephin Giacomini\thanks{Corresponding author, josephin.giacomini@unicam.it}}%\corref{cor}}
%\ead{josephin.giacomini@unicam.it}

\author[1]{Pierluigi Maponi\thanks{pierluigi.maponi@unicam.it}}
%\ead{pierluigi.maponi@unicam.it}

%\cortext[cor]{Corresponding author}

\affil[1]{School of Science and Technology - Mathematics Division, University of Camerino, Via Madonna delle Carceri 9, 62032 Camerino (MC), Italy.}

\maketitle

\begin{abstract}
%% Text of abstract
The design of heat exchanger fields is a key phase to ensure the long-term sustainability of such renewable energy systems. This task has to be accomplished by modelling the relevant processes in the complex system made up of different exchangers, where the heat transfer must be considered within exchangers and outside exchangers. We propose a mathematical model for the study of the heat conduction into the soil as consequence of the presence of exchangers. Such a problem is formulated and solved with an analytical approach. On the basis of such analytical solution, we propose an optimisation procedure to compute the best position of the exchangers by minimising the adverse effects of neighbouring devices. Some numerical experiments are used to show the effectiveness of the proposed method also by taking into account a reference approximation procedure of the problem based on a finite difference method.
\end{abstract}

%%Graphical abstract
%\begin{graphicalabstract}
%\includegraphics{grabs}
%\end{graphicalabstract}

%%Research highlights
%\begin{highlights}
%\item An inverse source problem to effectively mimic a geothermal exchanger
%\item Analytical solution of the heat transfer problem in a solid-fluid system
%\item An efficient optimisation method to design an optimised geothermal field
%\end{highlights}

%\begin{keyword}
%% keywords here, in the form: keyword \sep keyword
\keywords{optimisation; fluid-solid systems; integral equations; inverse source problem; geothermal field design.}
%% PACS codes here, in the form: \PACS code \sep code

%% MSC codes here, in the form: \MSC code \sep code
%% or \MSC[2008] code \sep code (2000 is the default)

%\end{keyword}

%\end{frontmatter}

%% main text

\section{Introduction}

Low-enthalpy geothermal applications provide sustainable, economic and safe solutions for the future energy supply. Their advantages are the very low environmental impact, the uninterrupted production during the summer and the winter, and the relatively simple installation that can be done in almost all geological settings.

The vertical exchangers are the most popular solution to exchange heat with the soil: a single or double U-shaped pipe is placed into a borehole~\cite{singledoubleU,singledoubleUhelical}, and a carrier fluid is used to exchange heat with the ground material, which at suitable depth has constant temperature throughout the year. Their performances depend on a number of factors, such as exchanger depth, thermophysical properties of the ground material, thermal properties of the filling grout, fluid dynamics parameters of the fluid (velocity, viscosity, diffusivity, and so on). So optimising a geothermal exchanger means developing a fluid dynamics model that depends on those parameters and is able to consider their coupled effect~\cite{1exGlobalOurs}. However, a single geothermal exchanger is often not sufficient to satisfy the energy demand of a building, which requires the installation of multiple devices. A geothermal field consists in an array of devices tidily laid out in a given plot of land. For such systems, the complexity in the optimal design of a single device has to be jointly considered with the difficulty in the analysis of inter-device interference for optimising the geothermal field in a long time perspective. The modelling of such a complex system is a fundamental step in the design of a geothermal field, otherwise the energy extraction or injection for years may introduce a growing thermal anomaly in the ground, such as the local cooling or heating in the interior of the field~\cite{interference}. Consequently, the thermal reservoir of the soil will vanish and the whole field will show a drastically reduced performance~\cite{optim1}. These thermal anomalies are partially compensated by the conductive heat flux through the boundaries of the field, but this is not very effective for systems with too much compact exchangers~\cite{interference}. Thus, the design of an efficient geothermal field should avoid as much as possible thermal anomalies in the soil, even for the long-term heat extraction or injection. A key feature of the geothermal field (strictly connected to the soil temperature profile) is the above-mentioned arrangement of the devices, since the neighbouring exchangers may negatively interfere with each other~\cite{interference}. In fact, a distance between devices of about $7$-$8$ m is usually recommended~\cite{distance}. However, such a prescription is rather empirical, since the complexity of the problem makes unreliable a general estimation of this distance for different geological settings and/or different systems~\cite{long-termBhe}. A more precise design process has to take into account several system features as decision variables (such as the position of the exchangers, their geometrical features and the physical parameters of the ground), while its objective function should require the maximum possible produced energy. However, this quantity is very hard to be computed, and such an objective function would make the optimisation problem practically unsolvable. So, it is usually more effective a simplified objective function that can be computed directly from the soil temperature dynamics that is induced by the presence of the exchangers. The soil temperature dynamics is determined by the energy flux between exchangers and soil, which is influenced by a variety of factors~\cite{influenceFactors1,interference,verdoya-chiozzi,groundwater2}: the number and the geometric characteristics of the devices, the arrangement of the devices, the lithotypes composing the stratigraphy of the soil from which depend both the heat diffusion rate and the presence of aquifers or rainwater infiltrations.

The rainwater infiltrations~\cite{spadoniRichards} and aquifer movements can influence the heat propagation in the soil. However, the estimation of soil moisture dynamics is not an easy task, it requires complex geological surveys and the evaluation of rainwater effects in a long-term perspective. Moreover, inaccurate estimations can introduce large bias in the design process with the consequent worsening of the results obtainable by using purely conduction models. Some studies have considered the advective transport under simplified conditions, for instance the well-known moving infinite line source model investigates the heat transfer only on a horizontal plane and neglects the axial effects due to the finite length of the exchanger~\cite{molina-giraldo}. However, the interesting study in~\cite{verdoya-chiozzi} shows that the heat transport by soil moisture dynamics can be neglected if the Darcy velocity is lower than $10^{-7}$ m/s. So the use of the conductive heat transfer model is a common practice to analyse the temperature of the soil around a geothermal exchanger, where groundwater dynamics is usually discarded, both in numerical computations~\cite{numericConduction,long-termBhe} and in analytical studies~\cite{analitycalAxialeffect,analitycalFast}.

Borehole heat exchanger (BHE) fields have been largely explored, since the time of the first low-temperature geothermal applications in 1970s, so several theoretical and practical contributions have been already produced for BHE fields. The researches have analysed nonresidential ground-source heat pump installations~\cite{kavanaugh}, best practices for designing geothermal systems~\cite{kavanaughNew}, and basic principles for the heating and cooling of buildings~\cite{ashraeFundamentals}. In particular, this last paper presents a strategy for the design of borehole fields known as Ashrae method, which is a fast procedure predicting the BHE field depth, based on the infinite cylindrical source solution (ICS)~\cite{carslaw1986} and a penalty term to take into account the thermal interference between adjacent exchangers. The study~\cite{improvedAshrae2015} showed that previous methods fail in their task, and it proposed a new procedure for the accurate calculation of the penalty term that mimics the original Ashrae method. This improved procedure has also been tested by a sensitivity analysis~\cite{improvedAshrae2016} to show its reliability for an extended range of working conditions. Medium/large BHE fields with unbalanced seasonal loads are analysed in~\cite{unbalancedLoads}, where a new design method for BHE fields based on the superposition of the effects of different BHEs is proposed.

The methods described above provide not only models for the heat transfer in presence of exchangers but also strategies for the optimisation of their activity. In fact, once the model for the description of the heat exchange in the geothermal field has been formulated, it can be exploited in an optimisation procedure to support the design process. Usually, mathematical optimisation methods applied to borehole fields aim to strategically define the workloads of the devices~\cite{optim1} and/or to arrange the borehole positions~\cite{optim2,optim3}. More precisely, optimisation techniques have been used in~\cite{optim2} to strategically operate and arrange in space borehole exchangers. On the other hand, an interesting fact is shown in~\cite{optim3} for homogeneous fields: an optimal load assignment yields the same result of an optimal BHE placement, and the combined optimisation approach produces only slightly better results. Thus, between the two control strategies, the placement optimisation has to be preferred to reduce the management effort of the BHE field. In general, the best position for a couple of exchangers in homogenous soils is the one that maximises the distance between them, in such a way that the mutual interference is minimised. When the array is made of more than two devices, the interference phenomenon among them becomes more complex and a regular lattice may be a suboptimal solution for the position of the devices, especially on heterogeneous soils having quite different diffusivities. So, such a geothermal system needs a detailed analysis.

In this paper, we consider a conductive model for the heat propagation into the soil and, on the basis of this model, we propose an optimisation method for the position of the exchangers that minimises the deviation of the soil temperature from the undisturbed temperature profile in a prescribed time interval. This model takes into account the mutual influence between the soil and the exchangers, since they constitute a coupled system. Thus, both the conductive heat transfer into the soil and the convective heat transfer into the exchangers are considered, as well as their joint action for the heat exchange inside a geothermal field. The key element in the proposed approach is the analytical solution of the heat conductive problem for a generic position of the exchangers. The derivation of this solution is based on standard arguments on the fundamental solution of the heat equation, and an inverse problem for estimating an ad-hoc source term in the conductive problem that mimics the effect of the exchangers. More precisely, this inverse problem allows a simple definition of the source term from the knowledge of the positions of the exchangers, providing in this way a powerful tool for the optimal arrangement of the exchangers. We finally perform a numerical experiment with the proposed method and the numerical results show good reliability and reasonable computational times.

In Section~\ref{sec:generic_form}, the heat conductive problem for a geothermal field is formulated. In Section~\ref{sec:numeric_he}, the finite difference method is used to define a reference procedure for the approximate solution of this heat conductive problem. In Section~\ref{sec:analytic_he}, a forced heat conductive problem is introduced and the corresponding analytical solution is provided. In Section~\ref{sec:inverse_prob}, an inverse source problem for the heat equation is defined and solved. In Section~\ref{sec:optimization}, an optimisation procedure for the placement of the borehole exchangers is described. Section~\ref{sec:results} shows the results of a numerical experiment for the comparison of the analytical and numerical approaches, as well as some applications of the optimisation method. Finally, in Section~\ref{sec:conclusions}, some remarks and further developments are provided.

\section{Formulation of the heat conduction problem}\label{sec:generic_form}

We formulate the problem of the heat propagation into a geothermal field as an initial-boundary value problem for the heat equation. We consider a three-dimensional bounded interval $(-A,A)\times(-B,B)\times(0,H)$, with $A,B,H>0$, containing the exchangers $E_i, i=1,2,\ldots,N_E$, where $N_E$ is the number of exchangers placed in the field. The domain
\begin{equation}\label{domainD}
D = \left\{\bx = (x,y,z)^T \in \mathbb{R}^3: \bx \in \left((-A,A)\times(-B,B)\times(0,H) \setminus \bigcup_{i=1}^{N_E} E_{i} \right) \right\},
\end{equation}
gives the space occupied by the soil surrounding the exchangers. All the exchangers have the same dimensions and the same workloads; they are approximated with parallelepipeds having square cross section with side length $L_E$ and depth $H_E$. We consider the following heat conduction problem:
\begin{equation}\label{global_problem}
\begin{cases}\displaystyle
\frac{\pt u}{\pt t} (\bx,t) - \alpha \Delta u(\bx,t) = 0, & \bx \in D, t \in (0,\overline t), \\
u(\bx,0) = g(\bx), & \bx \in D \cup \Gamma_S \cup \Gamma_E, \\
u(\bx,t) = T_S(\bx), & \bx \in \Gamma_S,t \in (0,\overline t), \\
u(\bx,t) = T_E(\bx,t), & \bx \in \Gamma_E,t \in (0,\overline t),
\end{cases}
\end{equation}
where $[0,\overline t]$ is the time domain, $\pt D = \Gamma_S \cup \Gamma_E$ is the boundary of $D$ where $\Gamma_S$ is the boundary of the parallelepiped $[-A,A]\times[-B,B]\times[0,H]$ and
\begin{equation*}
\Gamma_E = \bigcup_{i=1}^{N_E} \pt E_{i}
\end{equation*}
is the set of the boundaries of all the exchangers, $g$ is a function describing the initial soil temperature distribution that corresponds to the temperature profile of the undisturbed soil, $\alpha$ is the soil thermal diffusivity, $T_S$ is the temperature profile of the undisturbed soil before the system activation, which is supposed to remain constant in time, and $T_E$ is the temperature profile on the exchanger wall. We assume these compatibility conditions: $g(\bx)=T_S(\bx)$, $\bx\in\Gamma_S$, and $g(\bx)=T_E(\bx,0)$, $\bx\in\Gamma_E$. The solution $u$ of problem~\eqref{global_problem} describes how the temperature of the soil varies due to the presence of the exchangers. For simplicity, we analyse the heat transfer in the field up to a maximum depth $H>H_E$; moreover, we have used positive $z$ to describe depth values, so the real geothermal field is modelled by a domain that is formally the symmetric with respect to the plane $z=0$. This choice does not interfere with the description of the physical phenomenon.

\section{Numerical solution of the heat problem}\label{sec:numeric_he}

We briefly describe the approximation of the solution of problem~\eqref{global_problem} by a finite difference method. For simplicity, the space discretisation grid is built for the particular case of exchangers in a regular lattice, however different arrangements of the exchangers can be easily considered with the same procedure. Let $N_x,N_y,N_z$ be the partition size along the $x,y$ and $z$ directions, respectively. Let
\begin{equation*}
h_x=\frac{2A}{N_x}, \qquad h_y=\frac{2B}{N_y}, \qquad h_z=\frac{H}{N_z}
\end{equation*}
be the initial discretisation steps, which are adapted to fit the exchangers cross section. To fix idea, we consider the adaptation method on the $x$ axis. The number $Ne_x$ of grid points that are inside an exchanger is computed on the base of step $h_x$, i.e.,
\begin{equation*}
Ne_x=\left[\frac{L_E}{h_x}\right]+1,
\end{equation*}
where $[a]$ is the nearest integer to $a \in \mathbb{R}$. To have exactly $Ne_x$ grid points inside the exchanger, a new discretisation step is defined. Thus, the total number of grid points in the $x$ direction may increase or decrease by few units with respect to the initial choice $N_x$. For simplicity, we again denote the updated number of grid points by $N_x,N_y,N_z$ and the new discretisation steps by $h_x,h_y,h_z$, respectively. The grid points are
\begin{equation*}
\begin{aligned}
x_i & = -A + i h_x, & i & = 0,\dots,N_x, \\
y_j & = -B + j h_y, & j & = 0,\dots,N_y, \\
z_k & = k h_z, & k & = 0,\dots,N_z.
\end{aligned}
\end{equation*}
We denote the set of indices of all the inner points of the grid by $I$, i.e.,
\begin{equation*}
I = \left\{ (i,j,k) \in \mathbb{N} : 0 \leq i \leq N_x, \, 0 \leq j \leq N_y, \, 0 \leq k \leq N_z \te{ and } (x_i,y_j,z_k)^T \in D \right\},
\end{equation*}
the set of the indices of all the boundary points of the grid in $\Gamma_S$ by $B_S$, i.e.,
\begin{equation*}
B_S = \left\{ (i,j,k) \in \mathbb{N} : 0 \leq i \leq N_x, \, 0 \leq j \leq N_y, \, 0 \leq k \leq N_z \te{ and } (x_i,y_j,z_k)^T \in \Gamma_S \right\},
\end{equation*}
and the set of the indices of all the boundary points of the grid in $\Gamma_E$ by $B_E$, i.e.,
\begin{equation*}
B_E = \left\{ (i,j,k) \in \mathbb{N} : 0 \leq i \leq N_x, \, 0 \leq j \leq N_y, \, 0 \leq k \leq N_z \te{ and } (x_i,y_j,z_k)^T \in \Gamma_E \right\}.
\end{equation*}
We note that the position of each exchanger is slightly adjusted along the $x$-$y$ plane with respect to the discretisation grid, so, since the cross section of the exchangers contains an integer number of partitions, the definition of these sets of indices is particularly simple also for irregular lattices of exchangers. Moreover, there is one constraint the grid must satisfy: we prescribe that at least five inner points in $I$ interpose between an exchanger and the nearest one, in order to have a reliable approximation of the laplacian operator. The discretisation of the time interval $[0,\overline t]$ consists in dividing it into $N_t$ partitions with step $\Delta t$, that is
\begin{equation*}
\Delta t = \frac{\overline t}{N_t},
\end{equation*}
and nodes
\begin{equation*}
t_n=n \Delta t, \qquad n=0,1,\dots,N_t.
\end{equation*}

We use this grid to compute the numerical solution $U(\bx,t)$ of problem~\eqref{global_problem}. We denote
\begin{equation*}
U_{i,j,k}^n \approx U(x_i,y_j,z_k,t_n), \quad (i,j,k) \in I \cup B_S \cup B_E, 0 \leq n \leq N_t.
\end{equation*}
Applying the forward finite difference on the time derivative and the centred finite differences on the Laplacian term, we obtain the usual explicit Euler scheme:
\begin{equation}\label{findiff_he}
\begin{split}
& \frac{U_{i,j,k}^{n+1}-U_{i,j,k}^n}{\Delta t} - \alpha \left(\frac{U_{i+1,j,k}^n-2U_{i,j,k}^n+U_{i-1,j,k}^n}{h_x^2} \right. \\
&  \left. + \frac{U_{i,j+1,k}^n-2U_{i,j,k}^n+U_{i,j-1,k}^n}{h_y^2} + \frac{U_{i,j,k+1}^n-2U_{i,j,k}^n+U_{i,j,k-1}^n}{h_z^2} \right) = 0,
\end{split}
\end{equation}
for $(i,j,k) \in I, n=0,\dots,N_t-1$, where we can reorganise the terms to obtain
\begin{equation}\label{findiff_explicit}
\begin{split}
U_{i,j,k}^{n+1} = & \left( 1 - 2 \alpha \Delta t \left( \frac{1}{h_x^2} + \frac{1}{h_y^2} + \frac{1}{h_z^2} \right) \right) U_{i,j,k}^n + \alpha \frac{\Delta t}{h_x^2} \left(U_{i+1,j,k}^n + U_{i-1,j,k}^n \right) \\
& + \alpha \frac{\Delta t}{h_y^2} \left(U_{i,j+1,k}^n + U_{i,j-1,k}^n\right) + \alpha \frac{\Delta t}{h_z^2} \left(U_{i,j,k+1}^n + U_{i,j,k-1}^n\right),
\end{split}
\end{equation}
for all $(i,j,k) \in I, n=0,\dots,N_t-1$. At the initial time, $U_{i,j,k}^0=g_{i,j,k}$ for all $(i,j,k) \in I \cup B_S \cup B_E$, where $g_{i,j,k}=g(x_i,y_j,z_k)$ for all $(i,j,k) \in I \cup B_S \cup B_E$.
Finally, boundary conditions in problem~\eqref{global_problem} are used to obtain the solution $U_{i,j,k}^{n+1}$ for $(i,j,k) \in B_S \cup B_E$. Therefore, we derived the complete solution $U_{i,j,k}^{n}$ for all $(i,j,k) \in I \cup B_S \cup B_E, n=1,\dots,N_t$, for which the Courant-Friedrichs-Lewy stability condition~\cite{hirsch} must be considered for the choice of the discretisation steps.

\section{Analytical solution of the heat problem}\label{sec:analytic_he}

Problem~\eqref{global_problem} and the corresponding numerical scheme~\eqref{findiff_explicit} give a direct formulation of the heat diffusion in a geothermal field. This approximation approach provides also a quite efficient computational tool when the position of the exchangers is known. On the contrary, it is not so efficient to manage a geothermal field optimisation, because in this case problem~\eqref{global_problem} has to be solved several times with different positions of the exchangers. Thus we reformulate the heat diffusion in a geothermal field by a forced heat equation problem, whose solution can be computed analytically. Problem~\eqref{global_problem} and its numerical solution~\eqref{findiff_explicit} are used in Section~\ref{sec:results} as a benchmark for the comparison with the proposed model.

In Section~\ref{subsec:cond_pr_an}, the diffusive problem is formulated by a forced heat equation, where the source term accounts for the contribution of the geothermal exchangers. In Section~\ref{subsec:analitic_sol}, we illustrate the procedure to obtain the analytical solution by means of the heat kernel, which can be easily used with the new formulation of the problem.

\subsection{The analytical formulation}\label{subsec:cond_pr_an}

Let $\Omega=\{\bx=(x,y,z)^T \in \mathbb{R}^3 : x,y \in \mathbb{R}, 0<z<H\}\supset D$ be the domain for the space variables. The forced heat conductive problem is
\begin{equation}\label{diff_problem}
\begin{cases}\displaystyle
\frac{\pt u}{\pt t} (\bx,t) - \alpha \Delta u(\bx,t) = f(\bx,t), & \bx \in \Omega, t>0, \\
u(\bx,0) = g(\bx), & \bx \in \Omega, \\
u(\bx_0,t) = T_0, & \bx_0=(x,y,0)^T,t>0, \\
u(\bx_H,t) = T_H, & \bx_H=(x,y,H)^T,t>0,
\end{cases}
\end{equation}
where $f$ is the source term, $g$ is the initial temperature distribution, $T_0,T_H$ are the temperature values at the ground level and at depth $H$, respectively,, which are supposed constant, $\alpha$ is the soil thermal diffusivity. We assume these compatibility conditions: $g(\bx_0)=T_0$ and $g(\bx_H)=T_H$. The domain $\Omega$ corresponds to the three-dimensional slab of soil where the system of exchangers is placed, together with the exchangers themselves. So, problem~\eqref{diff_problem} is strictly related with problem~\eqref{global_problem} and the action of the exchangers is described by $f$. The solution $u$ of problem~\eqref{diff_problem} formally describes the temperature of both the soil and the exchangers in the respective positions, but its main objective is the modelling of soil temperature among exchangers, since this is the main information used for the optimisation of the geothermal field.

Let $u$ be the solution of problem~\eqref{diff_problem}, we write $u$ as
\begin{equation}\label{sol_diff_general}
u(\bx,t) = v (\bx,t) + w(z),
\end{equation}
where
\begin{equation}\label{translation}
w(z) = \frac{z}{H}(T_H-T_0) +T_0
\end{equation}
is a suitable translation along the $z$ direction such that $v(\bx,t)$ satisfies the homogeneous boundary conditions, i.e., $v(\bx,t)$ is the solution of the problem
\begin{equation}\label{diff_problem_hom}
\begin{cases}\displaystyle
\frac{\pt v}{\pt t} (\bx,t) - \alpha \Delta v(\bx,t) = f(\bx,t), & \bx \in \Omega, t>0, \\
v(\bx,0) = g(\bx)-w(z), & \bx \in \Omega, \\
v(\bx_0,t) = 0, & \bx_0=(x,y,0)^T,t>0, \\
v(\bx_H,t) = 0, & \bx_H=(x,y,H)^T,t>0.
\end{cases}
\end{equation}
In the following, we describe an explicit formula for the solution of problem~\eqref{diff_problem_hom} and in turn of problem~\eqref{diff_problem}, as well as an approximation of the solution of problem~\eqref{global_problem}, where the exchangers prescribe a proper source term $f$ in problem~\eqref{diff_problem_hom}.

\subsection{The solution of the forced heat transfer problem}\label{subsec:analitic_sol}

We illustrate the main steps for obtaining the analytical solution of problem~\eqref{diff_problem_hom}. This procedure uses standard arguments on the theory of partial differential equations, so the reader can find all the details in~\cite{evans}.

The fundamental solution of the heat equation in $\rn$ is
\begin{equation}\label{solution_fund}
\mathcal{G}(\bx,t;\boldsymbol\xi,\tau) =
\begin{cases}\displaystyle
\frac{1}{\left(4\alpha \pi (t-\tau)\right)^{n/2}} e^{-\frac{||\bx-\boldsymbol\xi||^2}{4\alpha (t-\tau)}}, & \bx,\boldsymbol\xi \in \mathbb{R}^n, 0<\tau<t<+\infty, \\
0, & \bx,\boldsymbol\xi \in \mathbb{R}^n, -\infty<\tau<t<0,
\end{cases}
\end{equation}
where $||\cdot||$ is the Euclidean norm; $\mathcal{G}$ represents the temperature at location $\bx$ and time $t$ resulting from an instantaneous point source of heat releasing a unit of thermal energy at location $\boldsymbol\xi$ and time $\tau$. It solves the problem
\begin{equation}\label{onedim_hom_he}
\begin{cases}\displaystyle
\frac{\pt \mathcal{G}}{\pt t}(\bx,t;\boldsymbol\xi,\tau) - \alpha \Delta \mathcal{G}(\bx,t;\boldsymbol\xi,\tau) = 0, & \bx,\boldsymbol\xi \in \rn, 0<\tau<t<+\infty, \\
\mathcal{G}(\bx,0;\boldsymbol\xi,0)=\delta(\bx-\boldsymbol\xi), & \bx,\boldsymbol\xi \in \rn,
\end{cases}
\end{equation}
where $\delta(\bx-\boldsymbol\xi)$ is the Dirac distribution on $\rn$. Function~\eqref{solution_fund} gives a powerful tool for obtaining solutions of forced heat diffusion problems~\cite{heatGreenfun}, even in presence of inhomogeneous initial conditions and bounded domains.

Let us consider the initial-value problem with inhomogeneous heat equation and an inhomogeneous initial condition. The solution is given by the following theorem:
\begin{thm}\label{thm:total_inhom}
Let $f$ be a locally integrable function in $\mathbb{R}^{n+1}$ that is bounded in each time strip $0\leq t\leq T$, for some $T>0$. Let $g$ be a continuous bounded function in the space variables. Then the problem
\begin{equation}\label{complete_problem}
\begin{cases}\displaystyle
\frac{\pt u}{\pt t}(\bx,t) - \alpha \Delta u(\bx,t) = f(\bx,t), & (\bx,t) \in \rn \times (0,+\infty), \\
u(\bx,0) = g(\bx), & \bx \in \rn,
\end{cases}
\end{equation}
has solution
\begin{equation}\label{sol_complete}
u(\bx,t) = \int_{\rn} \mathcal{G}(\bx,t;\boldsymbol\xi,0) g(\boldsymbol\xi) d\boldsymbol\xi + \int_0^t \int_{\rn} \mathcal{G}(\bx,t;\boldsymbol\xi,\tau) f(\boldsymbol\xi,\tau)d\boldsymbol\xi d\tau.
\end{equation}
\end{thm}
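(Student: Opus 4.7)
The plan is to exploit linearity of the heat operator and split the proof into two parts. Write the candidate solution \eqref{sol_complete} as $u=u_1+u_2$, where
\begin{equation*}
u_1(\bx,t)=\int_{\rn}\mathcal{G}(\bx,t;\boldsymbol\xi,0)g(\boldsymbol\xi)\,d\boldsymbol\xi,\qquad u_2(\bx,t)=\int_0^t\int_{\rn}\mathcal{G}(\bx,t;\boldsymbol\xi,\tau)f(\boldsymbol\xi,\tau)\,d\boldsymbol\xi\,d\tau.
\end{equation*}
It will then suffice to prove that $u_1$ solves the homogeneous Cauchy problem with initial datum $g$, and that $u_2$ solves the inhomogeneous equation with zero initial datum. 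Adding the two solutions gives a solution of \eqref{complete_problem} by superposition.

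For $u_1$, I would rely on the Gaussian decay of $\mathcal{G}(\bx,t;\boldsymbol\xi,0)$ in $\boldsymbol\xi$ and the boundedness of $g$ to justify differentiation under the integral sign for any $t>0$; then $(\pt_t-\alpha\Delta)u_1=0$ follows from the fact that $\mathcal{G}$ itself satisfies \eqref{onedim_hom_he}. For the initial condition, the classical argument is to show that the family $\mathcal{G}(\bx,t;\cdot,0)$ is an approximate identity as $t\to 0^+$, i.e.\ it has unit mass and concentrates near $\bx$. Given continuity and boundedness of $g$, a standard $\varepsilon/\delta$ split of the integral over a small ball around $\bx$ and its complement, together with dominated convergence, yields $\lim_{t\to 0^+}u_1(\bx,t)=g(\bx)$.

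The bulk of the work, and the main obstacle, concerns $u_2$ and is an application of Duhamel's principle. The intuition is that for each fixed $\tau\in(0,t)$ the inner integral
\begin{equation*}
U(\bx,t;\tau)=\int_{\rn}\mathcal{G}(\bx,t;\boldsymbol\xi,\tau)f(\boldsymbol\xi,\tau)\,d\boldsymbol\xi
\end{equation*}
solves the homogeneous heat equation in $(\bx,t)$ for $t>\tau$ with initial datum $f(\cdot,\tau)$ at time $\tau$, so informally
\begin{equation*}
(\pt_t-\alpha\Delta)u_2(\bx,t)=U(\bx,t;t)+\int_0^t(\pt_t-\alpha\Delta)U(\bx,t;\tau)\,d\tau=f(\bx,t).
\end{equation*}
The subtlety is that $\mathcal{G}(\bx,t;\boldsymbol\xi,\tau)$ is singular as $\tau\to t^-$, so differentiation under the integral in $t$ is not directly legitimate. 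The standard remedy, which I would carry out in detail, is to split the outer integral as $\int_0^{t-\varepsilon}+\int_{t-\varepsilon}^{t}$: on the first piece differentiation under the integral is licit and produces $\alpha\Delta_{\bx}$ applied inside the integral, while on the second piece one uses the local boundedness of $f$ on the strip $0\le t\le T$ to show that the contribution tends to $f(\bx,t)$ as $\varepsilon\to 0^+$ via the approximate-identity property of $\mathcal{G}$ in the spatial variable. Letting $\varepsilon\to 0^+$ produces the boundary term $f(\bx,t)$ together with a vanishing remainder. Finally, the inequality $|u_2(\bx,t)|\le t\sup_{[0,t]\times\rn}|f|$ gives $u_2(\bx,0)=0$, completing the verification.
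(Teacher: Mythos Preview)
Your proposal is correct and follows essentially the same approach as the paper: both invoke linearity to split $u=u_1+u_2$ into the homogeneous problem with initial datum $g$ and the forced problem with zero initial datum, then appeal to the standard properties of the heat kernel (approximate identity, Duhamel's principle). The paper's own proof is in fact much terser---it states the superposition idea in two sentences and defers all analytic detail to a reference---so your write-up is more complete than what appears in the text, but the strategy is identical.
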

\begin{proof}
The proof is based on the superposition principle, being the heat equation a linear partial differential equation. Thus, it is sufficient to add the solution of the associated homogeneous equation, i.e., with $f=0$, satisfying the inhomogeneous initial condition and the solution of problem~\eqref{complete_problem} equipped with initial condition $g=0$, see~\cite{vladimirov} for the complete proof of this theorem.
\end{proof}

\begin{remark}
This result for the unbounded spatial domain must be jointly considered with the case of bounded domain, since problem~\eqref{diff_problem_hom} is defined for $(x,y) \in \rtwo$ and $z\in(0,H)$. Let us consider the homogeneous conductive problem on a rod of length $H>0$, that is
\begin{equation}\label{rod_problem}
\begin{cases}\displaystyle
\frac{\pt u}{\pt t} (x,t) - \alpha \frac{\pt^2 u}{\pt x^2}(x,t) = 0, & x \in (0,H), t>0, \\
u(x,0) = g(x), & x \in (0,H), \\
u(0,t) = u(H,t) = 0, & t>0.
\end{cases}
\end{equation}
The solution of this problem can be computed in a way similar to problem~\eqref{complete_problem}, in fact, it has the following form
\begin{equation}\label{sol_rod}
u(x,t) = \int_{0}^H \mathcal{G}(x,t;\xi,0) g(\xi) d\xi,
\end{equation}
where $\mathcal{G}$ is the fundamental solution of the heat equation in the domain $(0,H)$ with homogeneous Dirichlet boundary conditions. From the separation of variables we have that
\begin{equation}\label{green_rod}
\mathcal{G}(x,t;\xi,\tau) = \frac{2}{H} \sum_{r=1}^{\infty} \sin \left(\frac{r\pi x}{H} \right) \sin \left(\frac{r\pi \xi}{H} \right) e^{-\frac{r^2\pi^2 \alpha (t-\tau)}{H^2}}.
\end{equation}
\end{remark}

In the following theorem we use the previous results to obtain the fundamental solution of the heat equation in the domain $\Omega$ of problem~\eqref{diff_problem_hom}.
\begin{thm}\label{separable_domain}
Let $\Omega = \mathbb{R}^2 \times (0,H)$. Let $\mathcal{G}^1,\mathcal{G}^2,\mathcal{G}^3$ be the fundamental functions of the heat equation in the space variables $x,y,z$, respectively; in particular, $\mathcal{G}^1,\mathcal{G}^2$ are relative to problem~\eqref{complete_problem} in $\mathbb{R}$ and $\mathcal{G}^3$ to problem~\eqref{rod_problem} in $(0,H)$. Then $\mathcal{G}=\mathcal{G}^1 \mathcal{G}^2 \mathcal{G}^3$ is the fundamental function of the heat equation in $\Omega$ satisfying the boundary conditions of problem~\eqref{diff_problem_hom}.
\end{thm}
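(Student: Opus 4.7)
The plan is a direct verification that the product $\mathcal{G}=\mathcal{G}^1\mathcal{G}^2\mathcal{G}^3$ satisfies the three requirements for being the fundamental function of the heat equation on $\Omega$: it solves the heat equation in the interior of $\Omega\times(0,+\infty)$, it matches the homogeneous Dirichlet boundary conditions on the planes $z=0$ and $z=H$ of problem~\eqref{diff_problem_hom}, and its initial trace reproduces the three–dimensional Dirac distribution. The crucial structural observation is that each factor $\mathcal{G}^i$ depends only on a single spatial variable (together with the corresponding $\xi$-coordinate and the times $t,\tau$), which makes the separation argument essentially algebraic.

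First I would check the PDE. Writing $\mathcal{G}=\mathcal{G}^1(x,t;\xi_1,\tau)\mathcal{G}^2(y,t;\xi_2,\tau)\mathcal{G}^3(z,t;\xi_3,\tau)$ and differentiating, the product rule gives
\begin{equation*}
\frac{\partial \mathcal{G}}{\partial t}=\sum_{i=1}^{3}\Big(\prod_{j\neq i}\mathcal{G}^j\Big)\frac{\partial \mathcal{G}^i}{\partial t}.
\end{equation*}
Since $\mathcal{G}^j$ does not depend on the spatial variable of $\mathcal{G}^i$ for $j\neq i$, the Laplacian decouples similarly:
\begin{equation*}
\alpha\Delta\mathcal{G}=\sum_{i=1}^{3}\Big(\prod_{j\neq i}\mathcal{G}^j\Big)\alpha\frac{\partial^2 \mathcal{G}^i}{\partial x_i^2}.
\end{equation*}
Each $\mathcal{G}^i$ solves the one–dimensional homogeneous heat equation in its own variable (this is built into the definitions in~\eqref{onedim_hom_he} and~\eqref{rod_problem}), so the two sums agree term by term and the three–dimensional heat equation holds pointwise on $\Omega\times\{t>\tau\}$.

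Next I would verify the boundary conditions. By construction, the series expansion~\eqref{green_rod} of $\mathcal{G}^3$ vanishes at $z=0$ and $z=H$ for every $t>\tau$, since each basis function $\sin(r\pi z/H)$ does; hence $\mathcal{G}=\mathcal{G}^1\mathcal{G}^2\mathcal{G}^3$ is identically zero on the two horizontal planes bounding $\Omega$, which is exactly the homogeneous Dirichlet condition required by~\eqref{diff_problem_hom}. Finally, for the initial condition I would use the tensor–product structure of the Dirac distribution: $\delta(\bx-\boldsymbol\xi)=\delta(x-\xi_1)\delta(y-\xi_2)\delta(z-\xi_3)$. Since $\mathcal{G}^1,\mathcal{G}^2$ are approximate identities in $\mathbb{R}$ as $t\to 0^+$ and $\mathcal{G}^3$ is an approximate identity on $(0,H)$ for the Dirichlet Laplacian, testing against a Schwartz function that is compactly supported in $\mathbb{R}^2\times(0,H)$ and passing the three one–dimensional limits successively (using Fubini to separate the integrations) yields $\mathcal{G}(\bx,0;\boldsymbol\xi,0)=\delta(\bx-\boldsymbol\xi)$ in the distributional sense.

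The main obstacle I anticipate is the initial–condition step rather than the PDE computation: one has to justify taking the limit $t\to 0^+$ inside the product, which amounts to the joint convergence of the three one–dimensional heat semigroups to the identity. The point is that the Gaussian factors $\mathcal{G}^1,\mathcal{G}^2$ and the series kernel $\mathcal{G}^3$ are all non–negative with unit mass (against their respective Dirichlet boundary conditions), so Fubini applies on any test function compactly supported away from $\{z=0,z=H\}$ and the limits separate, giving the product of three one–dimensional Dirac masses. Once that is in place, the three verifications together identify $\mathcal{G}^1\mathcal{G}^2\mathcal{G}^3$ with the fundamental function of the heat equation on $\Omega$ compatible with the boundary conditions of problem~\eqref{diff_problem_hom}.
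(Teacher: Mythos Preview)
Your proposal is correct and follows essentially the same route as the paper: a direct verification of the PDE via the product rule (using that each $\mathcal{G}^i$ solves its own one-dimensional heat equation), of the boundary conditions via the vanishing of $\mathcal{G}^3$ at $z=0,H$, and of the initial trace via the factorisation $\delta(\bx-\boldsymbol\xi)=\delta(x-\xi)\delta(y-\eta)\delta(z-\zeta)$. If anything, you are more careful than the paper about the initial-condition step---the paper simply writes the product of one-dimensional deltas ``in distributional sense'' without the approximate-identity and Fubini discussion you sketch.
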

\begin{proof}
	\begin{equation*}
	\begin{split}\displaystyle
	\frac{\pt \mathcal{G}}{\pt t} - \alpha \Delta \mathcal{G} & = \frac{\pt \mathcal{G}^1}{\pt t} \mathcal{G}^2 \mathcal{G}^3 + \mathcal{G}^1 \frac{\pt \mathcal{G}^2}{\pt t} \mathcal{G}^3 + \mathcal{G}^1 \mathcal{G}^2 \frac{\pt \mathcal{G}^3}{\pt t} - \alpha \frac{\pt^2 \mathcal{G}^1}{\pt x^2} \mathcal{G}^2 \mathcal{G}^3 - \alpha \mathcal{G}^1\frac{\pt^2 \mathcal{G}^2}{\pt y^2} \mathcal{G}^3 - \alpha \mathcal{G}^1 \mathcal{G}^2 \frac{\pt^2 \mathcal{G}^3}{\pt z^2} \\
	& = \left(\frac{\pt \mathcal{G}^1}{\pt t} - \alpha \frac{\pt^2 \mathcal{G}^1}{\pt x^2}\right)\mathcal{G}^2 \mathcal{G}^3 + \left(\frac{\pt \mathcal{G}^2}{\pt t}- \alpha \frac{\pt^2 \mathcal{G}^2}{\pt y^2}\right)\mathcal{G}^1 \mathcal{G}^3 + \left(\frac{\pt \mathcal{G}^3}{\pt t}- \alpha \frac{\pt^2 \mathcal{G}^3}{\pt z^2}\right)\mathcal{G}^1 \mathcal{G}^2 \\
	& = 0,
	\end{split}
	\end{equation*}
since $\mathcal{G}^1,\mathcal{G}^2,\mathcal{G}^3$ verify the homogeneous heat equation. Also for the initial condition and in distributional sense, we have
	\begin{equation*}
	\begin{split}
	\mathcal{G}(\bx,0;\boldsymbol\xi,0) & = \mathcal{G}^1(x,0;\xi,0) \mathcal{G}^2(y,0;\eta,0) \mathcal{G}^3(z,0;\zeta,0) \\
	& = \delta(x-\xi) \delta(y-\eta) \delta(z-\zeta) \\
	& = \delta(\bx-\boldsymbol\xi),
	\end{split}
	\end{equation*}
where $\boldsymbol\xi=(\xi,\eta,\zeta) \in \Omega$. Finally, the boundary condition in $\bx_0$ is satisfied, in fact,
	\begin{equation*}
	\begin{split}
	\mathcal{G}(\bx_0,t;\boldsymbol\xi,\tau) & = \mathcal{G}^1(x,t;\xi,\tau) \mathcal{G}^2(y,t;\eta,\tau) \mathcal{G}^3(0,t;\zeta,\tau) \\
	& = 0, \\
	\end{split}
	\end{equation*}
since $\mathcal{G}^3$ satisfies the null boundary condition in $z=0$, and a similar argument holds for the boundary $\bx_H$.
\end{proof}
In our case, since the domain $\Omega$ of problem~\eqref{diff_problem_hom} is rectangular, $\Omega=\mathbb{R}^2\times(0,H)$, the fundamental solution $\mathcal{G}(\bx,t;\boldsymbol\xi,\tau)$ of the heat equation in $\Omega$ is given by the product of the fundamental solutions related to proper one-dimensional problems. More precisely, from~\eqref{solution_fund} and~\eqref{green_rod}, we obtain
\begin{equation}\label{green_function}
\begin{split}
\displaystyle \mathcal{G}(\bx,t;\boldsymbol\xi,\tau) = &\mathcal{G}^1(x,t;\xi,\tau) \mathcal{G}^2(y,t;\eta,\tau) \mathcal{G}^3(z,t;\zeta,\tau)\\
\displaystyle = & \frac{1}{\sqrt{4 \alpha \pi (t-\tau)}} e^{-\frac{(x-\xi)^2}{4 \alpha (t-\tau)}} \frac{1}{\sqrt{4 \alpha \pi (t-\tau)}} e^{-\frac{(y-\eta)^2}{4 \alpha (t-\tau)}} \\
\displaystyle & \frac{2}{H} \sum_{r=1}^{\infty} \sin \left(\frac{r\pi z}{H} \right) \sin \left(\frac{r\pi \zeta}{H} \right) e^{-\frac{r^2\pi^2 \alpha (t-\tau)}{H^2}} \\
\displaystyle = & \frac{1}{2 \alpha \pi H(t-\tau)}e^{-\frac{(x-\xi)^2+(y-\eta)^2}{4 \alpha (t-\tau)}} \sum_{r=1}^{\infty} \sin \left(\frac{r\pi z}{H} \right) \sin \left(\frac{r\pi \zeta}{H} \right) e^{-\frac{r^2\pi^2 \alpha (t-\tau)}{H^2}}.
\end{split}
\end{equation}
Therefore, from Theorem~\ref{thm:total_inhom} and relation~\eqref{sol_diff_general}, the complete solution of problem~\eqref{diff_problem} can be expressed as follows
\begin{equation}\label{sol_diffusion}
\begin{split}
u(\bx,t) & = \int_{\Omega} \mathcal{G}(\bx,t;\boldsymbol\xi,0) \left(g(\boldsymbol\xi)-\frac{\zeta}{H}(T_H-T_0)-T_0\right) d\boldsymbol\xi \\
& \hspace{10pt} + \int_0^t \int_{\Omega} \mathcal{G}(\bx,t;\boldsymbol\xi,\tau)f(\boldsymbol\xi,\tau)d\boldsymbol\xi d\tau + \frac{z}{H}(T_H-T_0) +T_0,
\end{split}
\end{equation}
where $\mathcal{G}(\bx,t;\boldsymbol\xi,\tau)$ is given by~\eqref{green_function}.

\section{Estimation of the source term}\label{sec:inverse_prob}

Formula~\eqref{sol_diffusion} gives the solution of problem~\eqref{diff_problem}, where $f$ is a given source function. In our case, this source term $f$ is not known, but it has to mimic the presence of the exchangers. In this way, solution~\eqref{sol_diffusion} also provides an approximation of the solution of problem~\eqref{global_problem} and it can be used in the optimisation of the geothermal field. In Section~\ref{subsec:source_term}, we propose a method to obtain the source term as the solution of an inverse source problem. In Section~\ref{subsec:approx_an_sol}, we illustrate a procedure, based on formula~\eqref{sol_diffusion}, for the explicit computation of the approximated solution of problem~\eqref{global_problem}. In Section~\ref{subsec:source_bc_cfr}, we show a comparison among the exact solution of problem~\eqref{global_problem} and the solution of the corresponding source problem~\eqref{diff_problem} in the case of a single exchanger, in order to supply a numerical evidence of the close equivalence between the two formulations.

\subsection{The computation of the source term}\label{subsec:source_term}

The source term in problem~\eqref{diff_problem} has to yield the thermal effect of the exchangers in the surrounding soil. More specifically, this source term has to produce a temperature distribution on the exchangers position that corresponds to the one on the exchangers wall, and a temperature distribution on the free soil that corresponds to the temperature profile of the undisturbed soil. To this aim, we study two similar inverse source problems for the computation of the complete source. In fact, two types of sources are used: the exchanger source $\ve$, with support on the exchanger $E=\{\bx=(x,y,z)^T \in \mathbb{R}^3: x,y\in[-L_E/2,L_E/2],z\in[0,H]\}$, which has to produce the temperature surplus $T_E-T_S$ on the exchanger; the free soil source $\sigma$, with support on the whole domain $\Omega$, which has to produce the temperature profile of the free soil $T_S$. Moreover, the contribution of each source $\ve,\sigma$ in the complete source $f$ is:
\begin{equation}\label{complete_source_def}
f(\bx,t)=\ve(\bx,t)+\sigma(\bx,t), \quad \bx \in \Omega,t \in (0,\overline t).
\end{equation}
Note that the temperature of the exchanger depends on the surrounding soil temperature, and vice-versa, since there is a mutual influence that must be taken into account. We consider the following problems with source terms $\ve$ and $\sigma$:
\begin{equation}\label{sourceE_rod}
\begin{cases}\displaystyle
\frac{\pt e}{\pt t} (\bx,t) - \alpha \Delta e(\bx,t) = \ve(\bx,t), & \bx \in \Omega, t \in (0,\overline t), \\
e(\bx,0) = g_e(\bx), & \bx \in \Omega, \\
e(\bx_0,t) = 0, & \bx_0=(x,y,0)^T, t \in (0,\overline t), \\
e(\bx_H,t) = 0, & \bx_H=(x,y,H)^T, t \in (0,\overline t),
\end{cases}
\end{equation}
\begin{equation}\label{sourceS_rod}
\begin{cases}\displaystyle
\frac{\pt u}{\pt t} (\bx,t) - \alpha \Delta u(\bx,t) = \sigma(\bx,t), & \bx \in \Omega, t \in (0,\overline t), \\
u(\bx,0) = g_s(\bx), & \bx \in \Omega, \\
u(\bx_0,t) = T_0, & \bx_0=(x,y,0)^T, t \in (0,\overline t), \\
u(\bx_H,t) = T_H, & \bx_H=(x,y,H)^T, t \in (0,\overline t),
\end{cases}
\end{equation}
where the functions in the initial conditions
\begin{equation}\label{initial_cond_source}
g_e(\bx) = 0, \qquad g_s(\bx)=\left(T_H-T_{0}\right) \frac{z}{H}+T_{0},
\end{equation}
are specifically chosen to provide a simplification in the analytical solution of the problems. From standard arguments on the heat equation, we have that the initial condition does not influence the solution for large values of $t>0$. In particular, for our problems a characteristic value of $\overline t$ is $100$ days (cold/warm season length) and the contribution of the initial term is negligible after $5$ days. Thus, we can suppose that the solutions of problems~\eqref{sourceE_rod},\eqref{sourceS_rod} are independent from the initial condition. The source terms $\ve(\bx,t)$, $\sigma(\bx,t), \, \bx \in \Omega, t \in (0,\overline t)$ have to produce the following result
\begin{alignat}{2}
e(\bx,t) &= T_E(z,t)-T_S(z), \quad && \bx \in E, \label{usol_sourceE} \\
u(\bx,t) &= T_S(z), \quad && \bx \in \Omega, \label{usol_sourceS}
\end{alignat}
for problems~\eqref{sourceE_rod} and~\eqref{sourceS_rod}, respectively. The choice of the solutions $e,u$ in Equations~\eqref{usol_sourceE},\eqref{usol_sourceS}, respectively, reveals that the soil source $\sigma$ acts in any point of the domain $\Omega$, exchangers included, while the exchanger source $\ve$ acts only in the points corresponding to the device position. We note that this choice for the source terms considers an exchanger placed at the domain center with respect to the $x$-$y$ plane. Being the profiles $T_E,T_S$ independent of $x,y$, we assume that the source terms $\ve,\sigma$ have the same symmetry, thus $\ve$ is valid for every exchanger in the field regardless of its position on the $x$-$y$ plane. More precisely, the effect of an exchanger at $(\overline x,\overline y)$ can be produced by $\ve(x-\overline x, y-\overline y,z,t)$; similar arguments hold for the source $\sigma$. This fundamental property is a simple consequence of the convolution form of $\mathcal{G}$ in~\eqref{green_function}, and allows us to avoid the computation of the sources at each point of the domain $\Omega$, which would make the implementation of the proposed method computationally unaffordable.

We propose the following procedure to compute the solutions $\ve,\sigma$ of equations~\eqref{usol_sourceE},\eqref{usol_sourceS}, respectively. From formula~\eqref{sol_diffusion} we have that the solution of problem~\eqref{sourceE_rod} at $x=0$, $y=0$ (position of the exchanger) is
\begin{equation}\label{sourceE_eq}
\begin{split}
e(0,0,z,t) = & \sum_{r=1}^{\infty} \sin \left(\frac{r\pi z}{H} \right) \int_0^{t} e^{-\frac{r^2\pi^2 \alpha (t-\tau)}{H^2}} \left(\int_{-L_E/2}^{L_E/2} \frac{e^{-\frac{\xi^2}{4\alpha(t-\tau)}}}{\sqrt{2\alpha\pi H (t-\tau)}} d\xi \right.\\
& \left. \int_{-L_E/2}^{L_E/2} \frac{e^{-\frac{\eta^2}{4\alpha(t-\tau)}}}{\sqrt{2\alpha\pi H (t-\tau)}} d\eta \int_0^H \sin \left(\frac{r\pi \zeta}{H} \right) \ve(\zeta,\tau) d\zeta \right) d\tau,
\end{split}
\end{equation}
where the space integrals in the $x$ and $y$ variables have been restricted to the horizontal cross section of exchanger $E$, assuming that $E$ is the support of the source term $\ve$. In a similar way the solution $u$ of problem~\eqref{sourceS_rod} can be translated by function $g_s$ and represented by
\begin{equation}\label{sourceS_eq}
\begin{split}
s(0,0,z,t) = & \ u(0,0,z,t)-g_s(z) \\
= & \sum_{r=1}^{\infty} \sin \left(\frac{r\pi z}{H} \right) \int_0^{t} e^{-\frac{r^2\pi^2 \alpha (t-\tau)}{H^2}} \left(\int_{-A}^{A} \frac{e^{-\frac{\xi^2}{4\alpha(t-\tau)}}}{\sqrt{2\alpha\pi H (t-\tau)}} d\xi \right.\\
& \left. \int_{-B}^{B} \frac{e^{-\frac{\eta^2}{4\alpha(t-\tau)}}}{\sqrt{2\alpha\pi H (t-\tau)}} d\eta \int_0^H \sin \left(\frac{r\pi \zeta}{H} \right) \sigma(\zeta,\tau) d\zeta \right) d\tau,
\end{split}
\end{equation}
where we assumed that the support of the source term $\sigma$ is $\hat D=(-A,A)\times(-B,B)\times(0,H)$, that is the finite approximation of $\Omega$. Formulas~\eqref{sourceE_eq} and~\eqref{sourceS_eq} define explicit relations between the source terms $\ve,\sigma$ and the solutions $e,s$ of problems~\eqref{sourceE_rod},\eqref{sourceS_rod}, respectively; on the contrary, they are integral equations for $\ve$ and $\sigma$ when the functions $e$ and $s$ are known. We propose a simple procedure to obtain the solution of such integral equations that is based on the Fourier series expansion. In particular, on the basis of the boundary conditions, we consider the Fourier sine series expansion of $e$ in [0,H],
\begin{equation}\label{expansionE}
e(0,0,z,t)=\sum_{r=1}^{\infty} e_r(t) \sin \left( \frac{r\pi z}{H} \right),
\end{equation}
whose coefficients $e_r, r=1,2,\dots$, are given by
\begin{equation}\label{coeffE}
e_r(t) = \frac{2}{H} \int_0^H e(0,0,z,t) \sin \left( \frac{r\pi z}{H} \right)dz.
\end{equation}
From equation~\eqref{sourceE_eq} and using the Fourier expansion~\eqref{expansionE}, we obtain
\begin{equation}\label{integ_eqE}
\begin{split}
e_r(t) = & \int_0^{t} e^{-\frac{r^2\pi^2 \alpha (t-\tau)}{H^2}} \ve_r(\tau) \left(\int_{-L_E/2}^{L_E/2} \frac{e^{-\frac{\xi^2}{4\alpha(t-\tau)}}}{\sqrt{2\alpha\pi H (t-\tau)}} d\xi \right. \\
& \left. \int_{-L_E/2}^{L_E/2} \frac{e^{-\frac{\eta^2}{4\alpha(t-\tau)}}}{\sqrt{2\alpha\pi H (t-\tau)}} d\eta \right) d\tau, \quad r=1,2,\dots,
\end{split}
\end{equation}
where
\begin{equation}\label{fourier_exch_notdiscr}
\varepsilon_{r} (\tau) = \int_0^H \ve(\zeta,\tau) \sin \left(\frac{r\pi \zeta}{H} \right) d\zeta.
\end{equation}
Volterra integral equations~\eqref{integ_eqE} are considered for $t\in[0,\overline t]$ and are solved numerically by a simple quadrature method. We divide the time interval $[0,\overline{t}]$ into $N_t$ subintervals of length $\Delta t$. So, the time variable $t$ in discretised form becomes $t_n=n\Delta t$, $n=1,\dots,N_t$, and the discretisation nodes of the time variable $\tau$ are $\tau_p=(p-0.5)\Delta t$, $p=1,\dots,N_t$. From formula~\eqref{integ_eqE} by the midpoint quadrature rule, we obtain the following linear system for the source term coefficients:
\begin{equation}\label{sysE}
\Delta t \sum_{p=1}^{n} \ce_{n,p}^r \ve_{r}(\tau_p) = e_r(t_n), \qquad n=1,\dots,N_t, r=1,\dots,R,
\end{equation}
where $R$ is the truncation index in the Fourier series, $\ve_r(\tau_p)$ are the unknowns, $e_r(t_n)$ are the known terms given by formula~\eqref{coeffE}, and the matrix $\ce^r$ contains the approximation of the exponential time integral and the exact solutions of the space integrals with respect to $x$ and $y$, i.e.,
\begin{equation*}
\ce_{n,p}^r = \frac{2}{H} e^{-\frac{r^2\pi^2 \alpha (n-p+0.5) \Delta t}{H^2}} \erf^2\left(\frac{L_E}{4\sqrt{\alpha (n-p+0.5) \Delta t}}\right),
\end{equation*}
where $\erf$ is the Gauss error function.

An analogous procedure can be applied to $s$, thus integral equation~\eqref{sourceS_eq} is discretised by the following linear system:
\begin{equation}\label{sysS}
\Delta t \sum_{p=1}^{n} \cs_{n,p}^r \sigma_{r}(\tau_p) = s_r(t_n), \qquad n=1,\dots,N_t, r=1,\dots,R, 
\end{equation}
where
\begin{equation}\label{fourier_soil}
\sigma_{r}(\tau) = \int_0^H \sigma(\zeta,\tau) \sin \left(\frac{r\pi \zeta}{H} \right) d\zeta,
\end{equation}
and 
\begin{equation*}
\cs_{n,p}^r = \frac{2}{H} e^{-\frac{r^2\pi^2 \alpha (n-p+0.5) \Delta t}{H^2}} \erf\left(\frac{A}{2\sqrt{\alpha (n-p+0.5) \Delta t}}\right) \erf\left(\frac{B}{2\sqrt{\alpha (n-p+0.5) \Delta t}}\right).
\end{equation*}
Systems~\eqref{sysE},\eqref{sysS} are solved by means of $LU$ factorisation. We note that an important property of the discretisation schemes~\eqref{sysE},\eqref{sysS} is that they are decoupled with respect to the variable $r$, so the solution for each $r=1,\dots,R$ can be computed independently from the other ones.

\subsection{The approximated solution for the BHE field}\label{subsec:approx_an_sol}

Once the source term has been estimated, formula~\eqref{sol_diffusion} can be used to compute the solution of problem~\eqref{diff_problem}. We choose the initial temperature distribution $g(\bx), \, \bx \in \hat D,$ equal to the function $w(z)$ defined in~\eqref{translation}; as already mentioned, this choice does not influence the temperature $u$ for large values of $t$. For later convenience, we want to emphasise the dependence of $u$ on the exchangers position, so we denote with
$\bp=\left(p_1,p_2,\dots,p_{2N_E}\right )\in \mathbb{R}^{2N_E},$
the vector of the $x,y$ coordinates of the exchangers centre. Thus, the solution of~\eqref{diff_problem} becomes
\begin{equation}\label{sol_diffusion_onD}
\begin{split}
u(\bx,t;\bp) = & \int_0^{t} \int_{\hat D} \mathcal{G}(\bx,t;\boldsymbol\xi,\tau)f(\boldsymbol\xi,\tau)d\boldsymbol\xi d\tau + \frac{z}{H}(T_H-T_0) +T_0 \\
= & \int_0^{t} \left( \sum_{l=1}^{N_E} \int_{E_l} \mathcal{G}(\bx,t;\boldsymbol\xi,\tau)\ve(\boldsymbol\xi,\tau)d\boldsymbol\xi + \int_{\hat D} \mathcal{G}(\bx,t;\boldsymbol\xi,\tau)\sigma(\boldsymbol\xi,\tau)d\boldsymbol\xi \right) d\tau + \frac{z}{H}(T_H-T_0) +T_0 \\
= & \sum_{r} \sin\left(\frac{r \pi z}{H}\right) \int_{0}^{t} e^{-\frac{r^2 \pi^2 \alpha (t-\tau)}{H^2}} \Bigg( \sum_{l=1}^{N_E} \varphi(x-p_{2l-1},t-\tau) \varphi(y-p_{2l},t-\tau) \ve_r(\tau) \Bigg.\\
& \Bigg.+ \frac{2}{H}\erf\left(\frac{A}{2\sqrt{\alpha (t-\tau)}}\right)\erf\left(\frac{B}{2\sqrt{\alpha(t-\tau)}}\right) \sigma_r(\tau) \Bigg) d\tau+ \frac{z}{H}(T_H-T_0) +T_0,
\end{split}
\end{equation}
where $\bx=(x,y,z)^T \in \hat{D}$, $E_l,N_E,L_E$ have been defined in Section~\ref{sec:generic_form}, $\ve_r,\sigma_r$ have been defined in~\eqref{fourier_exch_notdiscr},\eqref{fourier_soil}, and
\begin{equation}\label{fun_erf}
\varphi(q,s) = \frac{1}{\sqrt{2H}}\left[\erf\left(\frac{1}{2\sqrt{\alpha s}}\left(\frac{L_E}{2}+q \right)\right) + \erf\left(\frac{1}{2\sqrt{\alpha s}}\left(\frac{L_E}{2}-q \right)\right)\right].
\end{equation}
We note that the source term $f(\boldsymbol\xi,\tau)$ in~\eqref{sol_diffusion_onD} actually does not depend on $\xi,\eta$ but only on $\zeta,\tau$ due to the simplifying assumption made in Section~\ref{subsec:source_term} about the independence of the source terms from the position of the exchangers. Moreover, the source has been split into $\ve$ (i.e., its Fourier coefficients $\ve_r$) that accounts for the exchanger action and $\sigma$ (i.e., its Fourier coefficients $\sigma_r$) that accounts for the soil action, according to Section~\ref{subsec:source_term}. Consequently, we split the spatial integral into two parts: one over the exchangers support and the other over the soil support that is the whole $\hat D$. {Another important consideration on the computation of $u$ is that the source terms $\ve,\sigma$ neglect the interference among devices, which is instead considered in $u$ by means of the superposition of functions $\varphi$.} Formula~\eqref{sol_diffusion_onD} is approximated by a simple numerical scheme on $\hat D\times[0,\overline t]$. We describe this approximation for the evaluation of $u$ on a uniform grid; a similar approach can be used to evaluate $u$ on different points. The time grids for $t$ and $\tau$ are those defined in Section~\ref{subsec:source_term}. The variable $\bx$ is discretised as in Section~\ref{sec:numeric_he}, i.e., by the grid points $(x_i,y_j,z_k)^T=(-A+i h_x,-B+j h_y, k h_z)^T$, $(i,j,k)\in I \cup B_S \cup B_E$. Applying the midpoint rule to the time integral, the final form of the discretised solution is
\begin{equation}\label{sol_an_discr}
\begin{split}
u_{i,j,k}^n = & \sum_{r=1}^R \sin\left(\frac{r \pi z_k}{H}\right) \Delta t \sum_{p=1}^n \mathscr{T}_{n-p,r} \phantom{\sum_{jj=1}^{N_E}} \\
& \left( \sum_{l=1}^{N_E} \varphi(x_i-p_{2l-1},t_{n-p}) \varphi(y_j-p_{2l},t_{n-p}) \ve_r(\tau_p) + {\mathscr{S}}_{n-p} \sigma_r(	\tau_p)\right)\\
& +\frac{z_k}{H}(T_H-T_0)+T_0,
\end{split}
\end{equation}
for all $i=0,\dots,N_x, \, j=0,\dots,N_y, \, k=0,\dots,N_z, \, n=1,\dots,N_t$, where $t_{n-p} = (n-p+0.5) \Delta t$, and the matrixes $\mathscr{T},\mathscr{S}$ are defined as follows,
\begin{align*} 
& \displaystyle \mathscr{T}_{n-p,r} = e^{-\frac{r^2\pi^2 \alpha t_{n-p} }{H^2}}, \\
& \displaystyle \mathscr{S}_{n-p} = \frac{2}{H}\erf\left(\frac{A}{2\sqrt{\alpha t_{n-p}}}\right)\erf\left(\frac{B}{2\sqrt{\alpha t_{n-p}}}\right).
\end{align*}

\subsection{Equivalence between heat conduction problems}\label{subsec:source_bc_cfr}

The physical phenomena described by problem~\eqref{global_problem} and problem~\eqref{diff_problem} are quite different. In fact, in problem~\eqref{global_problem} exchangers are outside the domain and their effect is considered by means of proper boundary conditions at the interface between the soil and the exchangers; whereas in problem~\eqref{diff_problem} exchangers are part of the domain and their effect is reproduced as a heat source. Nevertheless, these two heat conduction problems can be equivalent; here, the equivalence of the two problems is based on the soil temperature generated by their solution. Now, we compare these two formulations in order to give a numerical evidence of their equivalence. For the sake of simplicity, this comparison deals with a single exchanger placed in a sufficiently large soil portion. This simple study is able to show that the effect of the exchanger on the soil temperature, which should be formulated as problem~\eqref{global_problem}, is similar to the one of a proper heat source. In other words, the agreement of the results given by the two formulations allows the use of problem~\eqref{diff_problem} to describe the heat transfer into a geothermal field, which has the advantage to make much more easier the solution computation especially with several exchangers.

In the case of a single exchanger, problem~\eqref{global_problem} can be stated in cylindrical coordinates, that is
\begin{equation}\label{cyl_diff}
\begin{cases}\displaystyle
\frac{1}{\alpha} \frac{\pt w}{\pt t} (r,z,t) - \frac{\pt^2 w}{\pt r^2} (r,z,t) - \frac{1}{r} \frac{\pt w}{\pt r} (r,z,t) - \frac{\pt^2 w}{\pt z^2} (r,z,t) = 0, & \begin{split} &r>a, 0<z<H, \\ & t \in (0,\overline t),\end{split} \\
w(r,z,0) = T_S(z), & r>a, 0<z<H, \\
w(a,z,t) = T_E(z), & 0<z<H, t \in (0,\overline t), \\
\lim_{r \rightarrow +\infty} w(r,z,t)=T_S(z), & 0<z<H, t \in (0,\overline t), \\
w(r,0,t) = T_S(0), & r>a, t \in (0,\overline t), \\
w(r,H,t) = T_S(H), & r>a, t \in (0,\overline t),
\end{cases}
\end{equation}
where the exchanger has circular cross section with radius $a=L_E/2$. We solve problem~\eqref{cyl_diff} as done in Section~\ref{sec:analytic_he}, applying the translation $T_S(z)$ and the Green's function approach. The Green's function in the spatial domain outside a cylindrical hole is obtained by a multiplication theorem for cylindrical Green's functions similar to Theorem~\ref{separable_domain}. In fact, if the problem has azimuthal symmetry, the multidimensional Green's function can be obtained by multiplying one-dimensional Green's functions~\cite{ozisik,heatGreenfun}, that is
\begin{equation*}
\begin{split}
\mathcal{G^C} (r,z,t;r',\zeta,\tau) = & \frac{1}{\pi a^2 H} \int_{0}^{\infty} \beta e^{-\frac{\beta^2 \alpha (t-\tau)}{a^2}} \phi(\beta,r) \frac{\phi(\beta,r')}{J_0^2\left(\beta\right)+Y_0^2\left(\beta\right)} \, d\beta \\
& \sum_{l=1}^\infty e^{-\frac{l^2\pi^2 \alpha (t-\tau)}{H^2}} \sin\left(\frac{l \pi z}{H}\right) \sin\left(\frac{l \pi \zeta}{H}\right),
\end{split}
\end{equation*}
where $\phi(\beta,r)=J_0\left({\beta r}/{a}\right)Y_0\left(\beta\right)-Y_0\left({\beta r}/{a}\right)J_0\left(\beta\right)$. Thus it can be shown that the solution of problem~\eqref{cyl_diff} is
\begin{equation}\label{sol_1e_cyl}
\begin{split}
w(r,z,t) = &-\frac{2\alpha}{\pi a^2} \sum_{l=1}^\infty \sin\left(\frac{l \pi z}{H}\right) e_l \int_0^t e^{-\frac{l^2 \pi^2 \alpha (t-\tau)}{H^2}} \\
& \left(\int_{0}^{\infty} \beta e^{-\frac{\beta^2 \alpha (t-\tau)}{a^2}} \frac{\phi(\beta,r)}{J_0^2\left(\beta\right)+Y_0^2\left(\beta\right)} \, d\beta \right) \, d\tau + T_S(z),
\end{split}
\end{equation}
where $e_l$, $l=1,2,\dots,$ are the Fourier coefficients of the function $T_E-T_S$. See~\cite{ozisik} for the general solution formula.

The alternative formulation is given by problem~\eqref{diff_problem} where the source term accounts for only one exchanger placed at the domain center. Thus, from formula~\eqref{sol_diffusion_onD}, the solution $v$ of this problem is
\begin{equation}\label{sol_1e_src}
\begin{split}
v(\bx,t)= & \sum_{r} \sin\left(\frac{r \pi z}{H}\right) \int_{0}^{t} e^{-\frac{r^2 \pi^2 \alpha (t-\tau)}{H^2}} \Bigg( \varphi(x-p_{1},t-\tau) \varphi(y-p_{2},t-\tau) \ve_r(\tau) \Bigg.\\
& \Bigg.+ \frac{2}{H}\erf\left(\frac{A}{2\sqrt{\alpha (t-\tau)}}\right)\erf\left(\frac{B}{2\sqrt{\alpha(t-\tau)}}\right) \sigma_r(\tau) \Bigg) d\tau+ \frac{z}{H}(T_H-T_0) +T_0,
\end{split}
\end{equation}
where $\bx\in\hat D$, $\bp=(p_1,p_2)=(0,0)$, and $\ve_r,\sigma_r,\varphi$ have been defined in~\eqref{fourier_exch_notdiscr},\eqref{fourier_soil},\eqref{fun_erf}, respectively.

Both solutions $w$ and $v$ are approximated on $\hat D\times[0,\overline t]$ by a numerical scheme similar to the one described at the end of Section~\ref{subsec:approx_an_sol}. In addition, in formula~\eqref{sol_1e_cyl}, the domain of the integral over $\beta$ is truncated at $\beta=5a/\sqrt{\alpha \Delta t}$ and the resulting integral is approximated by the midpoint rule. We denote with $w_{i,j,k}^n$, $v_{i,j,k}^n$ the discretised form of $w$, $v$, respectively, on the uniform grid used in Section~\ref{subsec:approx_an_sol}. These results are reported in Section~\ref{sec:results}.

\section{Optimisation of the exchangers position}\label{sec:optimization}

We propose a method to compute the optimal placement of the exchangers in a geothermal field. In principle, this problem should maximise the energy exchanged with the soil but the corresponding objective function is too much complex for defining a practical design tool. However, a strictly related condition can be obtained by requiring a minimum deviation of the soil temperature with respect to the undisturbed temperature profile. The resulting optimisation process applied to the geothermal system is then made possible by using the relations discussed in Sections~\ref{sec:analytic_he},\ref{sec:inverse_prob}.

The objective function of the proposed optimisation problem is:
\begin{equation}\label{objfun}
F(\bp;\mathcal{X},t) = \sum_{k=1}^{P} \left( T_S(z_k)-u(\bx_k,t;\bp) \right)^2,
\end{equation}
where $\mathcal{X}=\{\bx_k=(x_k,y_k,z_k) \in D, k=1,\ldots,P\}$ is the set of the evaluation points, $P$ is the total number of evaluation points in $D$, $T_S$ is again the undisturbed soil temperature, $u(\bx_k,t;\bp)$ is the soil temperature at point $\bx_k$ and time $t$, defined in~\eqref{sol_diffusion_onD}. We note that having required that the evaluation points lie in $D$ means that $(x_k,y_k)\neq(p_i,p_{i+1})$, for all $k=1,\ldots,P$, $i=1,3,\ldots,2N_E-1$, since we are interested in the evaluation of the temperature outside the exchangers. Moreover, the time interval where we search for the optimal arrangement can be in principle different from the time interval $[0,\overline t]$ where we need to analyse the temperature of the geothermal field. In particular, this optimisation process is considered for a time $t=\tilde t$ with $\tilde t \in (0,\overline t)$. The optimisation problem consists in the minimisation of the objective function~\eqref{objfun} constrained to $\mathcal{C}$, i.e.,
\begin{equation}\label{min_problem}
\min_{\bp \in \mathcal{C}} F(\bp;\mathcal{X},\tilde t),
\end{equation}
where the constraints are given by
\begin{equation*}
\mathcal{C} = \left\{ \bp = \left(p_1,p_2,\ldots,p_{2N_E}\right): \ (p_{i},p_{i+1}) \in [-A,A] \times [-B,B], i=1,3,\dots,2N_E-1 \right\}.
\end{equation*}
The solution of problem~\eqref{min_problem} is computed by using the steepest descent method, that is
\begin{equation}\label{grad_descent}
\bp^{n+1}=\bp^n-\beta \, \nabla F(\bp^n;\mathcal{X},\tilde t),
\end{equation}
where $\nabla$ is the gradient operator with respect to variables $\bp$, $n \in \bbn$ is the algorithm step, with $\bp^0$ the initial position of the exchangers, $\beta$ is the parameter corresponding to the descent step length computed by a simple line search procedure. An active set strategy~\cite{practicalOptimization} is used to deal with the box constraints. We note that this is the simplest method among the class of gradient approaches, but it allows us to observe another important property of function $F$: its gradient can be calculated analytically avoiding the finite difference approximation; in fact, for $i=1,3,\ldots,2N_E-1$, we have:
\begin{equation}\label{F_deriv}
\begin{split}
\frac{\pt F}{\pt p_i}(\bp;\mathcal{X},\tilde t) & = -2 \sum_{k=1}^{P} \left( T_S(z_k)-u(\bx_k,\tilde t;\bp) \right) \frac{\pt u}{\pt p_i}(\bx_k,\tilde t;\bp) \\
& = -2 \sum_{k=1}^{P} \left( T_S(z_k)-u(\bx_k,\tilde t;\bp) \right) \left( \sum_{k'} \sin\left(\frac{k' \pi z_k}{H}\right) \int_{0}^{\tilde t} e^{-\frac{k'^2 \pi^2 \alpha (\tilde t-\tau)}{H^2}} \right. \\
& \left. \phantom{\int_{0}^{\tilde t}} \frac{\pt}{\pt p_i} \varphi(x_k-p_i,\tilde t-\tau) \varphi(y_k-p_{i+1},\tilde t-\tau) \ve_{k'}(\tau) d\tau \right),
\end{split}
\end{equation}
where $\varphi$ has been defined in~\eqref{fun_erf}, while $\ve_{k'}$ has been defined in~\eqref{fourier_exch_notdiscr}, and
\begin{equation}\label{erf_deriv}
\frac{\pt}{\pt p_i} \varphi(x_k-p_i,\tilde t-\tau) = \frac{1}{\sqrt{2\pi \alpha H (\tilde t-\tau)}} \left[e^{-\frac{\left(L_E/2+(x_k-p_i)\right)^2}{4\alpha(\tilde t-\tau)}} - e^{-\frac{\left(L_E/2-(x_k-p_i)\right)^2}{4\alpha(\tilde t-\tau)}} \right].
\end{equation}
The derivative with respect to $p_{i+1}$ can be computed by using a similar formula. The time integral in formula~\eqref{F_deriv} is approximated by the midpoint rule. In addition to the analytical computation of the derivative, our method is also based on the choice of the evaluation points in set $\mathcal{X}$ that must be well separated from the position of the exchangers. A simple strategy for this choice is described in Section~\ref{subsec:results}. Finally, the algorithm~\eqref{grad_descent} stops when one of the following criteria is fulfilled: 
\begin{equation}\label{stop_crit}
|\bp^{n+1}-\bp^n|_\infty\leq tol_1 \quad \te{ or } \quad \log_{10}(F(\bp^0;\mathcal{X},t))-\log_{10}(F(\bp^n;\mathcal{X},t))\geq tol_2 \quad \te{ or } \quad n\geq \overline n,
\end{equation}
where $tol_1,tol_2 \in \mathbb{R} \te{ and } \overline n \in \bbn$ are prescribed tolerances monitoring the distance between two successive solutions, the improvement in the objective function and the maximum number of allowed steps, respectively.

\section{Results}\label{sec:results}

We present some results obtained by numerical experiments to test the methods proposed in the previous sections, both for the analytical solution of the heat diffusion problem in a geothermal field and for the solution of the corresponding optimisation problem. The general context taken into account is a geothermal field operating in winter mode for about six months. As the geothermal plant is started, a conductive heat flux occurs from the warmer soil to the colder exchangers thus the soil temperature profile is modified by this heat loss, in turn the exchanger profile is modified by this heat gain. With the following simulations, we evaluate how the soil temperature field is affected by the presence of the exchangers. In Section~\ref{subsec:config}, the physical and geometrical characterisation of the geothermal field is given. In Section~\ref{subsec:settings}, the simulation details are described. Finally, in Section~\ref{subsec:results}, we show and discuss results obtained with the proposed methods.

\subsection{The exchanger convective problem and data of the geothermal field}\label{subsec:config}

The temperature profile $T_E$ at the exchanger wall, i.e., $z \in [0,H_E]$, has been obtained by exploiting the theory of completely developed fluid flows inside rectilinear pipes~\cite{bejan}. This hypothesis is reasonably valid for U-shaped exchangers, that constitute a simplified case but also one of the most common; other strategies should be considered for exchangers having different shapes. The convective thermal exchange between the fluid and the surrounding soil undergoes the first principle of Thermodynamics, that is
\begin{equation}\label{heat_dev_flow}
\begin{cases}
\displaystyle\frac{dT}{dz}(z,t)=\frac{k Nu}{b^2\rho c_p U}\left(u(\bx,t;\bp)-T(z,t)\right), & 0<z<H_E, \\
T(\overline z,t)=\overline T(t), & \overline z \in [0,H_E],
\end{cases}
\end{equation}
where $T$ is the mean temperature on the cross section of the pipe; $t$ is a fixed time instant in $[0,\overline t]$; $\bx=(x,y,z)^T$ is any point on the exchanger wall since there is azimuthal symmetry in the soil temperature; $k$ is the thermal conductivity of the fluid; $b$ is the pipe radius; $\rho,c_p$ are the density and the specific heat of the fluid, respectively; $U$ is the mean velocity of the fluid flow and $Nu$ is the Nusselt number. Cauchy problem~\eqref{heat_dev_flow} holds for the fluid in the descending and ascending parts of the pipe; more precisely, we have: $T=T_E^d$, $\overline z=0$, $\overline T(t)=T_{in}$ is the inlet fluid temperature, when considering the descending pipe; $T=T_E^u$, $\overline z=H_E$, $\overline T(t)=T_E^d(H,t)$, when considering the ascending pipe. So, in this evaluation, we neglect the contribution provided by the small curve at the bottom of the exchanger. From standard arguments on ordinary differential equations, we have:
\begin{equation*}
T_E^d(z,t)=e^{-cz} \left(T_{in} + c\int_0^z u(x,y,\zeta,t) e^{c\zeta} \, d\zeta \right),
\end{equation*}
\begin{equation*}
T_E^u(z,t)=e^{-c(z-H_E)} \left(T_E^d(H_E,t) + c\int_H^z u(x,y,\zeta,t) e^{c(\zeta-H_E)} \, d\zeta \right),
\end{equation*}
where $c=k Nu/b^2\rho c_p U$. As a consequence, the temperature profile $T_E$ on the whole interval $[0,H]$ is given by the average between the ascending and descending fluid temperatures in $[0,H_E]$ and joined continuously to the bottom soil temperature in $(H_E,H]$, that is
\begin{equation}\label{defT_E}
T_E(z,t)=
\begin{dcases}
\displaystyle\frac{T_E^d(z,t)+T_E^u(z,t)}{2}, & \te{if } 0 \leq z \leq H_E, \\
\displaystyle(z-H_E)\frac{T_H-T_{fE}}{H-H_E}+T_{fE}, & \te{if } H_E < z \leq H,
\end{dcases}
\end{equation}
where $T_{fE}=T_E^d(H_E,t)$ is the temperature value at the bottom of the exchangers. On the other hand, the temperature profile of the undisturbed soil given by $T_S$ is
\begin{equation}\label{defT_S}
T_S(z)=
\begin{cases}
\displaystyle\left(T_H-T_0\right) \frac{z}{\overline{H}}+T_0, & \text{if } 0 \leq z \leq \overline H, \\
T_H, & \te{if } \overline H < z \leq H,
\end{cases}
\end {equation}
where $\overline{H}$ is the maximum depth at which the soil temperature is expected to be influenced by seasonal variations. The soil temperature is assumed to linearly increase until depth $\overline H$ and then to have a constant value, which corresponds to the mean value in pelitic and pelitic-arenaceous lithotypes in the central Italy~\cite{parametersHE}. Note that the geothermal gradient is neglected since its effect is irrelevant for the depth of the geothermal fields under consideration.

In the numerical experiments, we considered a space domain $\hat D$ having size $2A=70$ m, $2B=70$ m and $H=40$ m. The exchangers have depth $H_E=25$ m, side length $L_E$ of about $0.25$ m, and pipe radius $b=0.016$ m. Assuming the fluid consists in a mixture of water and ethylene glycol~\cite{giacomini2018}, $k=4.1412\cdot 10^{4}$ J/(day m K), $\rho=1.0411\cdot 10^{3}$ kg/m$^3$, $c_p=3.6915\cdot 10^{3}$ J/(kg K). The flow mean velocity has been estimated $U= 3.6288\cdot 10^{4}$ m/day, according to data in~\cite{giacomini2018}. The evaluation of $Nu$ is obtained by $Nu=0.012\left(Re_D^{0.87}-280\right)Pr^{0.4}$, where $Pr$ is the Prandtl number and $Re_D$ is the Reynolds number for pipe flows, see~\cite{1exGlobalOurs,bejan} for a detailed discussion on empirical laws for $Nu$. The soil thermal diffusivity $\alpha$ is $0.05$ m$^2/$day, unless otherwise specified, $T_0=280$ K, $T_H=286$ K, $\overline{H}=15$ m. The temperature in the geothermal field has been simulated for a time interval of $180$ days, that is $\overline t= 180$ days in problem~\eqref{global_problem}, which in principle corresponds to the six-month period when the exchangers have winter operational mode. Finally, in the optimisation problem, $\tdt=120$ days has been used.

\subsection{Settings of the numerical and analytical solutions}\label{subsec:settings}

The numerical solution of problem~\eqref{global_problem} is computed by the finite difference scheme~\eqref{findiff_explicit} and by the analytical solution~\eqref{sol_an_discr}. The finite difference scheme is based on a uniform space grid with $N_x = 140, N_y = 140, N_z = 80, h_x = h_y = h_z = 0.5$ m. The observation time interval $[0,\overline t]$ has been partitioned in $N_t=225$ time steps with $\Delta t=0.8$ days, so the Courant-Friedrichs-Lewy condition for the three-dimensional heat equation is satisfied. The analytical solution~\eqref{sol_an_discr} is evaluated on the same space and time grid used for the finite difference method. However, it needs the computation of the source terms for the exchangers~\eqref{fourier_exch_notdiscr} and the soil~\eqref{fourier_soil} by means of the linear systems~\eqref{sysE},\eqref{sysS}, respectively. The Fourier coefficients of the exchanger $e_r$ in~\eqref{coeffE} are computed using the fast Fourier transform technique, while the Fourier coefficients of the soil $s_r$ are computed by using the definition from the temperature profile $T_S$~\eqref{defT_S}, obtaining the following result
\begin{equation*}
s_r = \frac{2H(T_H-T_0)}{\overline H r^2 \pi^2}\sin\left(\frac{\overline H r \pi} H\right).
\end{equation*}
We note that the computation of the source terms and the analytical solution cannot be done consequently, in fact, problems~\eqref{global_problem} and~\eqref{heat_dev_flow} are coupled since the soil temperature and the exchanger temperature influence each other. Thus, at each time step we implement a fixed-point iteration on $T_E$ in this way: at the time step $n+1$, from~\eqref{defT_E} the solution $(T_E)_k^{n+1}$ requires the knowledge of $T_E^d$ and $T_E^u$ thus of $u_{i,j,k}^{n+1}$; in turn, the solution $u_{i,j,k}^{n+1}$ in~\eqref{sol_an_discr} requires the knowledge of $(T_E)_k^{n+1}$. Such solutions are available at the previous time step $n$, thus we use $u_{i,j,k}^n$ to first calculate the predictors $(T_E)_k^{n+1,0}$ and consequently $u_{i,j,k}^{n+1,0}$, then we compute the correctors $(T_E)_k^{n+1,\nu}$ and $u_{i,j,k}^{n+1,\nu}$ for $\nu=1,2,\dots$, until a stop criterion on $T_E$ in infinite norm is verified.

\subsection{Numerical results}\label{subsec:results}

\begin{figure}[t!]
\centering
\vspace{10pt}
\subfloat[]{\adjincludegraphics[width=.48\textwidth,clip]{egidi1a.pdf}\label{heat_1ex_cfr_bc}}
\subfloat[]{\adjincludegraphics[width=.48\textwidth,clip]{egidi1b.pdf}\label{heat_1ex_cfr_src}}
\caption{Temperature distribution after $180$ days around a single exchanger:~\protect\subref{heat_1ex_cfr_bc} solution of the boundary value problem~\eqref{cyl_diff},~\protect\subref{heat_1ex_cfr_src} solution of the forced conduction problem~\eqref{diff_problem}.}\label{heat_1ex_cfr}
\adjincludegraphics[width=.48\textwidth,clip]{egidi2.pdf}
\caption{Relative error between the solutions of problem~\eqref{diff_problem} and problem~\eqref{cyl_diff} at each time step $n$ of the simulation.}\label{err-heat_1e_cfr}
\end{figure}
We firstly show the results of the comparison between the two formulations of the heat conduction in the geothermal field, see Section~\ref{subsec:source_bc_cfr} for details. Figure~\ref{heat_1ex_cfr} shows the temperature distribution at $z=20$ m in the soil caused by the presence of a single exchanger. In particular, Figure~\ref{heat_1ex_cfr}\subref{heat_1ex_cfr_bc} reports the temperature field~\eqref{sol_1e_cyl} that is solution of problem~\eqref{cyl_diff}, where the effect of the exchanger on the surrounding soil is classically described by a boundary condition on the interface between soil and exchanger. On the other hand, Figure~\ref{heat_1ex_cfr}\subref{heat_1ex_cfr_src} reports the temperature field~\eqref{sol_1e_src} that is solution of problem~\eqref{diff_problem}, where the effect of the exchanger is described by means of a source term in the heat equation. The two temperature fields are almost the same. For a quantitative comparison of these two solutions, we calculate the relative error between the discretised form of $w$ in~\eqref{sol_1e_cyl} and $v$ in~\eqref{sol_1e_src} in norm 1 at each time step, that is
\begin{equation}\label{rel_error}
\displaystyle err^n_k=\frac{\sum_{i=1}^{N_x} \sum_{j=1}^{N_y} |w_{i,j,k}^n-v_{i,j,k}^n|}{\sum_{i=1}^{N_x} \sum_{j=1}^{N_y} |w_{i,j,k}^n|}, \qquad n=1,\dots,N_t,
\end{equation}
where $k=40$ is the index of the half-depth plane (that is at $z=20$ m in these simulations) where the results have been shown by figures. Similar results are obtained for different choices of $z$ due to the symmetry of the problem therefore have not been reported here. Figure~\ref{err-heat_1e_cfr} shows that the relative error is of the order of $10^{-6}$, so the two solutions are in good agreement. The results reported in Figures~\ref{heat_1ex_cfr} and~\ref{err-heat_1e_cfr} gives a numerical evidence of the reliability of the proposed approach to solve analytically problem~\eqref{global_problem} by means of the reformulation~\eqref{diff_problem}. We expect that the two solutions are in good agreement also in the case of multiple exchangers. However, in this case, we cannot solve the heat conduction problem~\eqref{global_problem} by a simple formulation as the one given by problem~\eqref{cyl_diff}; so, a numerical approach must be used to solve this problem, as the one adopted in~\eqref{findiff_explicit}.

\begin{figure}[t!]
\centering
\adjincludegraphics[width=.48\textwidth,clip]{egidi3.pdf}
\caption{Initial temperature distribution for the lattice arrangement with $16$ exchangers.}\label{heat_4x4init_om}
\vspace{10pt}
\subfloat[]{\adjincludegraphics[width=.48\textwidth,clip]{egidi4a.pdf}\label{heat_4x4final_om_NUM}}
\subfloat[]{\adjincludegraphics[width=.48\textwidth,clip]{egidi4b.pdf}\label{heat_4x4final_om_AN}}
\caption{Temperature distribution after $180$ days for the lattice arrangement with $16$ exchangers:~\protect\subref{heat_4x4final_om_NUM} numerical results,~\protect\subref{heat_4x4final_om_AN} analytical results.}\label{heat_4x4final_om}
\end{figure}
We show the results of the numerical experiment for the comparison between the finite difference solution of the boundary value problem~\eqref{global_problem} and the forced conduction problem~\eqref{diff_problem}. Figures~\ref{heat_4x4init_om},\ref{heat_4x4final_om} show the temperature distribution in a geothermal field composed of $16$ exchangers placed on a regular lattice and in the case of homogenous soil thermal properties; in particular, these figures are relative to the horizontal section of the field taken at half depth, that is $z=20$ m. Figure~\ref{heat_4x4init_om} shows the temperature distribution at the initial time, where the soil has a uniform temperature that corresponds to the value of the undisturbed profile at half depth, i.e. $286$ K, while the exchangers are colder (at $280$ K that is the temperature of the incoming water) and correspond to the blue points. Figure~\ref{heat_4x4final_om} shows the temperature distribution at the final time step, i.e., after $180$ days of operation of the geothermal system; in particular, Figure~\ref{heat_4x4final_om}\subref{heat_4x4final_om_NUM} shows the numerical solution computed by the finite difference scheme, i.e., formula~\eqref{findiff_explicit}, whereas Figure~\ref{heat_4x4final_om}\subref{heat_4x4final_om_AN} shows the analytical solution computed by~\eqref{sol_an_discr}. In both these figures, cold areas arise in neighbourhoods of the exchangers. In particular, Figure~\ref{heat_4x4final_om}\subref{heat_4x4final_om_NUM} shows a more evident interference among exchangers but the areas in the cross points of the diagonals among four devices are almost undisturbed being at $285.9$ K; in Figure~\ref{heat_4x4final_om}\subref{heat_4x4final_om_AN}, we can see a situation similar to Figure~\ref{heat_4x4final_om}\subref{heat_4x4final_om_NUM}, with a slightly slower heat diffusion. The soil affected by interference in Figure~\ref{heat_4x4final_om}\subref{heat_4x4final_om_NUM} differs in temperature from the corresponding areas in Figure~\ref{heat_4x4final_om}\subref{heat_4x4final_om_AN} for at most $0.1$ degrees. Thus, from the qualitative point of view there is a good agreement between the two solutions. For a quantitative comparison of the two solutions, we consider the relative error~\eqref{rel_error} where $w_{i,j,k}^n$ is replaced by $u_{i,j,k}^n$ in~\eqref{sol_an_discr} and $v_{i,j,k}^n$ by $U_{i,j,k}^n$ in~\eqref{findiff_explicit}. The diagram in Figure~\ref{err-heat_4x4_om} shows this relative error for the various simulation time steps; more precisely, we can observe $err^1\approx2.2\cdot10^{-5}$ at the initial time and $err^{N_t}\approx4.2\cdot10^{-4}$ at the end. Different arrangements of the geothermal field show similar results, providing an experimental verification of the reliability of the analytical solution.
\begin{figure}[t!]
\centering
\includegraphics[width=.5\textwidth]{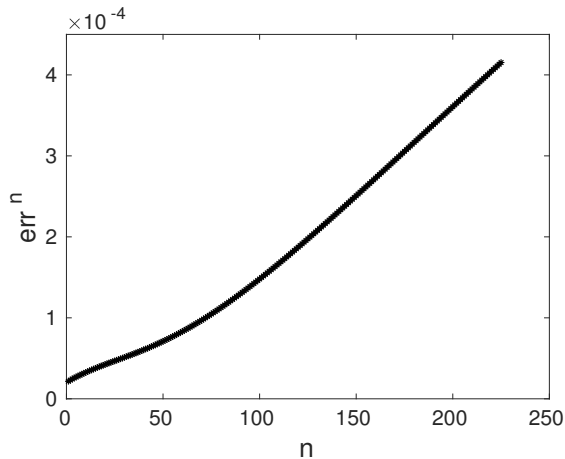}
\caption{Relative error between the finite difference solution and the analytical approach at each time step $n$ of the simulation, in the case of a lattice arrangement with $16$ exchangers.}\label{err-heat_4x4_om}
\end{figure}

\begin{figure}[h!]
\centering
\subfloat[\label{optim_4x4init_om}]{\includegraphics[width=.45\textwidth]{egidi6a.pdf}} \hspace{10pt}
\subfloat[\label{optim_4x4final_om}]{\includegraphics[width=.45\textwidth]{egidi6b.pdf}}
\caption{Optimisation method: initial~\protect\subref{optim_4x4init_om} and optimal~\protect\subref{optim_4x4final_om} arrangements of $16$ exchangers.}\label{optim_4x4_om}
\vspace{10pt}
\subfloat[\label{heat-optim_4x4final_om_NUM}]{\adjincludegraphics[width=.50\textwidth,trim={{.025\width} {.008\height} 0 0},clip]{egidi7a.pdf}}
\subfloat[\label{heat-optim_4x4final_om_AN}]{\adjincludegraphics[width=.50\textwidth,trim={{.025\width} {.008\height} 0 0},clip]{egidi7b.pdf}}
\caption{Temperature distribution after $180$ days for the optimal arrangement with $16$ exchangers:~\protect\subref{heat-optim_4x4final_om_NUM} numerical results,~\protect\subref{heat-optim_4x4final_om_AN} analytical results.}\label{heat-optim_4x4final_om}
\end{figure}
\begin{figure}[t]
\centering
\includegraphics[width=.5\textwidth]{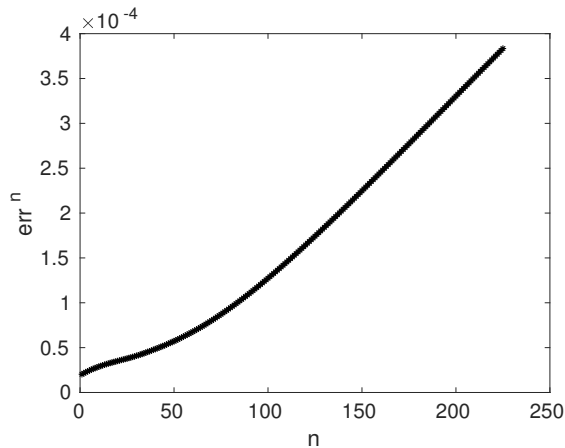}
\caption{Discrete relative error between the numerical and analytical computations at each time step $n$ of the simulation, in the case of $16$ exchangers with arrangement obtained by the optimisation procedure.}\label{err-heat-optim_4x4_om}
\end{figure}
In the next experiment we consider the optimisation procedure described in Section~\ref{sec:optimization}. The uniform lattice, like the one in Figure~\ref{heat_4x4init_om}, is taken as the initial guess for the iterative algorithm. The tolerances and the maximum number of steps in~\eqref{stop_crit} have been chosen as follows: $tol_1=0.1$, $tol_2=2$, $\overline n=150$. The evaluation points in $\mathcal{X}$ are not fixed throughout the process but at each step $n$ they are chosen at regular distance with respect to the exchangers position $\bp^n$. Such strategy prevents unfortunate situations in which an exchanger moves to an evaluation point. Figure~\ref{optim_4x4_om} shows the initial and final arrangements of the exchangers, the little blue circles are the starting positions and the red stars are the optimal positions obtained from problem~\eqref{min_problem}. In Figure~\ref{optim_4x4_om}\subref{optim_4x4final_om}, the devices are moved away from each other but the symmetry of the initial configuration is maintained. The central devices are kept near their original position while the external ones are pushed towards the boundary to better exploit the useful space and allow a bigger distance among all the exchangers. Also the heat exchange between soil and devices in the optimal arrangement has been computed; Figure~\ref{heat-optim_4x4final_om} reports these results. The temperature distributions computed by the finite difference method (Figure~\ref{heat-optim_4x4final_om}\subref{heat-optim_4x4final_om_NUM}) and the analytical approach (Figure~\ref{heat-optim_4x4final_om}\subref{heat-optim_4x4final_om_AN}) are in good agreement. As in the previous experiment, the numerical solution develops a slightly bigger interference among exchangers, but the maximum difference between these two solutions in the soil portions affected by the influence of multiple exchangers is again no bigger than $0.1$ degrees. Comparing the two solutions, an interesting feature is that in Figure~\ref{heat-optim_4x4final_om}\subref{heat-optim_4x4final_om_NUM} the colder areas around the boundary devices lose the circular symmetry and tend to develop mostly in the direction of the neighbouring exchangers, while the mitigating effect of the boundary condition slightly advances among the boundary devices; on the other hand, in Figure~\ref{heat-optim_4x4final_om}\subref{heat-optim_4x4final_om_AN} all the devices develop around colder areas with circular symmetry since there is no boundary condition. Also for this case, the relative error~\eqref{rel_error} observed during the simulation ranges from $2.0\cdot10^{-5}$ to $3.8\cdot10^{-4}$, see Figure~\ref{err-heat-optim_4x4_om}. We note that the relative errors in the case of 16 exchangers with regular arrangement (Figure~\ref{err-heat_4x4_om}) and the case with optimal arrangement (Figure~\ref{err-heat-optim_4x4_om}) are very similar, meaning that the accuracy of the analytical solution is independent of the position of the devices. We also note that the effectiveness of the optimal positioning is clearly visible by the soil temperature distribution. In fact, a brief comparison between Figures~\ref{heat_4x4final_om} and~\ref{heat-optim_4x4final_om} shows that the thermal impact in the soil on the long-term conditions can be effectively reduced by the optimal positioning of the devices, even applying a constant heating load. In Figure~\ref{heat-optim_4x4final_om}, the thermal effect of the exchangers in the soil is mitigated as a consequence of the optimisation process, as can be seen from the bigger undisturbed soil portions among the exchangers.

\begin{figure}[h!]
\centering
\includegraphics[width=.48\textwidth]{egidi9.pdf}
\caption{Final arrangements of $16$ exchangers in a heterogeneous geothermal field bisected by extremely different thermal diffusivities.}\label{optim_4x4_et}
\vspace{10pt}
\subfloat[\label{optim_5x6init_et}]{\includegraphics[width=.48\textwidth]{egidi10a.pdf}} \hspace{10pt}
\subfloat[\label{optim_5x6final_et}]{\includegraphics[width=.48\textwidth]{egidi10b.pdf}}
\caption{Optimisation method: initial~\protect\subref{optim_5x6init_et} and optimised~\protect\subref{optim_5x6final_et} arrangements of $30$ exchangers in a heterogeneous geothermal field bisected by extremely different thermal diffusivities.}\label{optim_5x6_et}
\end{figure}
The advantage of our analytical approach is to make easier the solution computation of the heat problem, dividing it into the calculation of the direct solution by the Green's function and the source computation by an inverse problem technique, which is rather fast since it is independent of the position of the exchangers on the $x$-$y$ plane. Moreover, this analytical approach makes feasible the proposed optimisation technique. We conclude the numerical experiment with two examples that emphasise the value of the physical-based approach used in the proposed optimisation procedure. Indeed, the previous example can be also solved by using a simple geometric optimisation approach, where the uniform distribution of the devices is computed with respect to the available space. On the contrary, a geometric approach cannot properly deal with a heterogeneous soil situation. So, in the following example, the field is assumed made up of two equivalent parts with extremely different values for thermal diffusion. The two soils are separated by the vertical median of $\hat D$; the soil in the left side has thermal diffusion $0.39$ m$^2$/day, and in the right side $0.022$ m$^2$/day; these values correspond to the mean diffusivity for dry gravel and rock salt~\cite{soilThermalDiff}, respectively. The stopping criteria in problem~\eqref{min_problem} are defined as before. Figure~\ref{optim_4x4_et} shows the final configuration of $16$ exchangers in the heterogeneous geothermal field. The initial configuration is the same of Figure~\ref{optim_4x4_om}\subref{optim_4x4init_om}. In Figure~\ref{optim_4x4_et}, the exchangers initially placed in the left side of the domain tend to maximise the distance among each other exploiting all the surrounding space. The two central devices increase their respective distance moving towards the exterior without going far from their original position. On the contrary, the exchangers initially placed in the right side of the domain keep their original position, except for the first and last exchanger of the third column, which move slightly away due to the influence of the two devices placed at the boundary points with $x$ coordinate equal to $30$. It is clear from Figure~\ref{optim_4x4_et} that in each of the two parts with constant thermal diffusivity, the final configurations maintain the symmetry with respect to the axis $y=35$, in accordance with the symmetry of the problem data; of course, other symmetry properties, like global symmetry, are no longer obtained in the final solution. Figure~\ref{optim_5x6_et} shows the initial and final configurations of $30$ exchangers into the heterogeneous geothermal field. The arrangement reached in Figure~\ref{optim_5x6_et}\subref{optim_5x6final_et} is similar to that in Figure~\ref{optim_4x4_et}; in fact, the optimisation algorithm favours devices placed along the domain boundary over the central positions in the soil with higher diffusivity, and leave the arrangement quite unchanged in the other part. Another noteworthy feature is that the central column of exchangers occurs on the separation line between the two areas with largely different diffusivities, therefore these devices slightly shift to the right, where the thermal exchange is lower. In conclusion, these two tests on a heterogeneous field show that the proposed optimisation process is not a geometrical optimisation procedure, but it actually takes into account the physical characteristics of the problem.

\begin{table}[ht]
\caption{Computation times of the optimisation procedure for the proposed examples.}\label{tab:time_optim_proc}
\centering
\begin{equation*}
{\begin{array}{llcc} \toprule
\multicolumn{2}{c}{\te{Configuration}} & \te{Iterations} & \te{Time [min]} \\ \midrule
\te{homogeneous} & 4\times4 - \te{Fig.}~\ref{optim_4x4_om} & 105 & 32 \\ \midrule
\multirow{2}{*}{heterogeneous} & 4\times4 - \te{Fig.}~\ref{optim_4x4_et} & 96 & 66 \\
& 5\times6 - \te{Fig.}~\ref{optim_5x6_et} & 74 & 75 \\
\bottomrule
\end{array}}
\end{equation*}
\caption{Computation times of one iteration of the optimisation procedure based on the analytical approach and the numerical approach for different space steps, in the case of homogeneous soil with $16$ exchangers.}\label{tab:time_optim_an_num}
\centering
\begin{equation*}
{\begin{array}{lcc} \toprule
\multirow{2}{*}{$h_x(=h_y)$ [m]} & \multicolumn{2}{c}{\te{Time [sec]}} \\
& \te{Analytical method} & \te{Finite difference approach} \\ \midrule
1 & 17 & 20\cdot65=1300\approx 22 \te{ min} \\
0.5 & 17 & 21\cdot65=1365\approx 23 \te{ min} \\
0.25 & 18 & 188\cdot65=12220\approx 3 \te{ h } 24 \te{ min} \\
\bottomrule
\end{array}}
\end{equation*}
\end{table}
Finally, in Table~\ref{tab:time_optim_proc} we show the computation time of the optimisation procedure for the examples considered in this section. All the simulations have been run in the machine HP ProLiant ML350 Gen9, equipped with Red Hat Enterprise Linux Workstation release 7.5 (Maipo) and 2 CPU Intel(R) Xeon(R) CPU E5-2620 v3 @2.40GHz, with 6 physical cores and 6 threads for each CPU. In particular, the simulation code has been written in Matlab and exploits one core. The computation time obviously increases as the number of exchangers increases. The less time-demanding procedure is that one for 16 exchangers in the homogeneous soil, where we have a shorter computation time of each iteration than for the heterogeneous soil case. This fact is mainly due to a simplified algorithm implementation in the homogeneous case. On the contrary, we can observe that the number of iterations for the heterogeneous case study is slightly lower than the one for the homogeneous case study. This is mainly due to the fact that in the homogeneous case the gradients in the steepest descent method are smaller than the ones in the heterogeneous case.

In Table~\ref{tab:time_optim_an_num}, we also show a comparison between the computation time of a single iteration of the proposed optimisation procedure and the one of a similar optimisation procedure where the analytical method is replaced by the finite difference method for the heat diffusion computation. Each step of the proposed optimisation procedure consists in, as described in Section~\ref{sec:optimization}, the calculation of the solution~\eqref{sol_an_discr} and its derivatives with respect to the exchangers position that are contained in formula~\eqref{F_deriv}. On the other hand, each step of the optimisation procedure based on the finite difference approach consists in the calculation of the solution $U$ by using the time-marching scheme~\eqref{findiff_explicit} and the finite difference approximation of its derivatives, that requires $4N_E+1$ evaluation of the solution $U$, thus the computation of $U$ in the whole space and time grid must be repeated $4N_E+1$ times. We note that in this way we simply have a lower estimate of the computation time for the finite difference approach. In particular, Table~\ref{tab:time_optim_an_num} considers the configuration with homogeneous soil and $16$ exchangers, operating for $120$ days, where the computation is performed by using three grids with increasing fineness having smaller and smaller step size $h_x=h_y$ along the $x,y$ directions and a constant step size $h_z=0.5$ m along the $z$ direction. Even the coarsest discretisation shows a big gap in the elapsed time for the two computational approaches, where the analytical method is about $76$ times faster than the numerical approach. The reduction rate is increasingly higher as the discretisation steps reduce and for the finest grid the proposed optimisation procedure is about $680$ times faster than the procedure based on the finite difference approach. We remind that such a gap in the computation time is also due to the fact that a reduction of the space step induces a reduction of the time step, according to the Courant-Friedrichs-Lewy condition. Instead, in the analytical method the usual time step $\Delta t=0.8$ days has been used, and it could be even increased to make the computation faster even keeping a comparable accuracy. In addition, the optimisation procedure based on the finite difference scheme reaches the optimal solution only if the space grid is sufficiently fine, that is $h_x,h_y\leq0.25$ m. Hence, Table~\ref{tab:time_optim_an_num} shows that the finite difference approach used within an optimisation procedure is definitely unfavourable in terms of computation cost.

\section{Conclusions}\label{sec:conclusions}

The paper studies the geothermal fields of borehole heat exchangers for the exploitation of shallow geothermal energy resource. This study considers the coupling of the soil and exchangers thermal response, which consists in taking into account the mutual influence between the cooling of the soil and the corresponding warming of the exchanger (in winter operational mode). Two main contributions are obtained: the development of an analytical method to quantify the temperature evolution within the geothermal field, and the development of an optimisation procedure for the positions of the devices. These two results are strictly related, in fact, the heat exchange in the geothermal field is given in terms of the fundamental solution of the heat equation, allowing a direct connection between the exchangers position and the temperature field in the soil. The soil temperature obtained with the proposed method is compared with the result obtained by standard approximation methods, i.e., the Green's function approach in the case of a unique exchanger, and  the finite difference method in the case of multiple exchangers. From this numerical experiment, we can conclude that the proposed method is reliable for describing the temperature distribution in a geothermal field. Moreover, the optimisation procedure is tested by using some reference cases, showing again a good reliability of the proposed procedure. It is also shown that the induced thermal anomalies are reduced by the optimisation process.

Despite the good results obtained with the proposed method, some assumptions made in the paper could be further refined. For instance, the undisturbed soil thermal profile could be adapted to be as close as possible to a real profile. In addition, the inverse problem for the estimation of the source term that mimics the exchangers presence should take into account the interference among sufficiently near devices. Finally, on-field experiments for the temperature measurements in a real geothermal field should be performed to obtain a practical validation of the proposed approach.

%% The Appendices part is started with the command \appendix;
%% appendix sections are then done as normal sections
%% \appendix

%% \section{}
%% \label{}

%% If you have bibdatabase file and want bibtex to generate the
%% bibitems, please use
%%
%\bibliographystyle{elsarticle-num} 
%\bibliography{Manuscript}
\printbibliography

%% else use the following coding to input the bibitems directly in the
%% TeX file.

%\begin{thebibliography}{00}
%
%%% \bibitem{label}
%%% Text of bibliographic item
%
%\bibitem{}
%
%\end{thebibliography}
\end{document}